\documentclass[12pt,a4paper,reqno]{amsart}
\usepackage{latexsym}
\usepackage{amsfonts}
\usepackage[mathscr]{eucal}
\usepackage{epsfig}
\usepackage{a4wide}
\usepackage{ifthen}
\usepackage{graphicx}

\newboolean{draft}
\setboolean{draft}{false}

\newcommand{\trunc}[1]{ {\lfloor #1 \rfloor} }

\frenchspacing

\parindent0pt
\def\@copyright{}
\setlength{\parskip}{5pt plus 2pt minus 1pt}

\newtheorem{theorem}{Theorem}[section]

\newtheorem{lemma}{Lemma}[section]

\newtheorem{corollary}{Corollary}[section]
\newtheorem{remark}{Remark}[section]

\newtheorem{example}{Example}[section]

\newcommand{\N}{ \mathbb{N} }

\newcommand{\R}{ \mathbb{R} }

\newcommand{\calI}{\mathcal{I}}

\newcommand{\calU}{\mathcal{U}}

\newcommand{\calZ}{\mathcal{Z}}

\newcommand{\eins}{{\mathbf 1}}

\newcommand{\Cov}{{\mbox{Cov\,}}}

\renewcommand{\circ}{\cdot}

\begin{document}

\bibliographystyle{apalike}

\pagestyle{plain}
\begin{titlepage}
\pagenumbering{arabic}
\title[MONITORING TIME SERIES]{}
\date{Version: October, $30$th, 2006}
\end{titlepage}

\maketitle

\begin{center}
  \Large
  MONITORING PROCEDURES TO DETECT UNIT ROOTS AND STATIONARITY
\end{center}
\vskip 1cm
\begin{center}
  Ansgar Steland\footnote{Address of correspondence: Prof. Dr. A. Steland, Institute of Statistics, RWTH Aachen University, W\"ullnerstr. 3, D-52056 Aachen, Germany.}\\
  Institute of Statistics\\
  RWTH Aachen University\\
  Germany\\
  steland@stochastik.rwth-aachen.de
\end{center}

\begin{abstract}
When analysing time series an important issue is to decide whether the time series
is stationary or a random walk. Relaxing these notions, we consider the problem to
decide in favor of the $ I(0) $- or $ I(1)$-property.
Fixed-sample statistical tests for that problem
are well studied in the literature. In this paper we provide first results for
the problem to monitor sequentially a time series. Our stopping times are based
on a sequential version of a kernel-weighted variance-ratio statistic.
The asymptotic distributions are established for $I(1)$ processes,
a rich class of stationary processes, possibly
affected by local nonparametric alternatives, and the local-to-unity model.
Further, we consider the two interesting change-point models where the time series
changes its behaviour after a certain fraction of the observations and derive the
associated limiting laws. Our Monte-Carlo studies show that the proposed
detection procedures have high power when interpreted as a hypothesis test,
and that the decision can often be made very early.
\\[1cm]
{\bf Keywords:} Autoregressive unit root, change-point, control chart, nonparametric smoothing, sequential analysis, weighted partial sum process.
\end{abstract}

\newpage
\section*{Introduction}

For many time series, in particular for economic data, the question
whether the series is stationary or becomes stationary when taking
first order differences is a delicate problem. Fixed-sample tests
have been extensively studied in the statistics and econometrics
literature and this topic is still an active area of research. Most
proposed unit root tests are parametric approaches based on the
least squares estimator in an AR model. Under the random walk
hypothesis non-standard limiting distributions appear. Classic and
more recent references are Dickey and Fuller (1979), Rao (1978,
1980), Evans and Savin (1981), Chan and Wei (1987, 1988), and
Phillips and Perron (1988), Stock (1994a), Saikkonen and L\"utkepohl
(2003), and Lanne and Saikkonen (2003). Nonparametric tests have
been studied by Kwiatkowski et al. (1992), Bierens (1997), Breitung
(2002), and Giraitis et al. (2003). The KPSS test, proposed in the
first paper and also studied in detail in the latter two articles,
avoids a detailed specification of the process. It can be easily
used for testing both the null hypothesis of stationarity against
the unit root alternative, and vice versa, and, as shown by
simulations, is considerably more robust in terms of type I error
than most parametrically motivated tests. Thus we use that statistic
as a starting point to develop detection procedures which can be
used to detect a change from $ I(0) $ (covering stationarity) to $
I(1) $ (covering random walks), and vice versa. Similar detection
procedures related to the Dickey-Fuller statistic, which is often
more powerful but can be affected by severe size distortion, will be
studied by the author in a separate paper in detail (Steland, 2006).

As an example of a simple change-point model (regime switching model) capturing
this feature consider
$$
 Y_{n+1} = \phi_n Y_n + \epsilon_n \quad \mbox{where} \quad
 \left\{ \begin{array}{cc} \phi_n = 1, & n = 1, \dots, \trunc{N\vartheta}-1, \\
                        |\phi_n| < 1, & n = \trunc{N\vartheta}, \dots, N,
                        \end{array} \right.
$$
with mean-zero error terms $ \{ \epsilon_n \} $ and a change-point parameter
$ \vartheta \in (0,1) $. Until the change the {\em in-control model} of a $I(1)$ process holds.
This model is a special case of a $ I(1) $-to-$ I(0) $ change-point model
studied in this article. If $ \phi_n = \phi \in [-1,1] $
Lai and Siegmund (1983) studied fixed accuracy estimation of an AR parameter assuming
i.i.d. error terms by sampling until the Fisher information exceeds a constant.
Allowing for dependent errors,
we consider a different setup and study truncated stopping times of the type
$ S_N = \min \{ 1 \le n \le N : T_{N,n} < c \} $
for some {\em control statistic}, $ T_{N,n} $, and a {\em control limit} (critical value) $c$,
where monitoring stops latest at the $ N $th observation.
That maximum sample size, $N$, plays the role of a time horizon where
a decision is made in any case; if no signal is given,
the in-control model (null hypothesis) is accepted as a plausible model,
otherwise one stops concluding that
a change occurred and further measures may be in order. We stop latest at $N$, since often
the assumption implicit to many classic monitoring procedures with random sample size,
namely
that a process can and should be monitored forever, is unrealistic,
and approaches allowing to specify a time horizon may be more appropriate in many cases.
For example, consider financial portfolios.
Continuous or pseudo-continuous (daily) trading is often not feasible, due to cost
constraints and because identification of mid- and long-term investment
chances requires time- and cost-intensive analyses on a quarterly to yearly basis.
Between these analyses one should apply monitoring rules with time horizon to trigger additional updates, risk hedges, or other measures.
Having approximations to the distributions of the control statistic and the monitoring
rule (stopping time) for large $N$ are therefore of interest, thus motivating to
assume $ N \to \infty $ for asymptotic studies.

To allow the design of procedures satisfying arbitrary constraints, e.g. prespecified
type I error, average run length (ARL) or median run length,
we establish the limiting laws
which are functionals of Brownian motion or the Ornstein-Uhlenbeck process.
Besides the important change-point models mentioned above, we consider pure
random walks, stationary processes, local trend-stationary processes, and
the local-to-unity model, where a sequence of models is considered which converges to a random walk, as the maximum sample size, $N$, tends to $ \infty $.

Let us briefly comment on other related work.
The nonparametric detection of a change in the mean of a stationary time series based on
kernel-weighted averages and the problem of optimal kernel choice
has been studied by Steland (2004a, 2005a). For the related problem to detect a
change in the mean of a random walk see Steland (2005b).
A posteriori methods,
where observations after the change are also available, have been studied by
Ferger (1993, 1995), Hu\v{s}kov\'a (1999) and Hu\v{s}kov\'a and Slaby (2001).
For an approach based on jump-preserving statistics aiming at detecting
quickly large shifts see Pawlak, Rafaj\l owicz and Steland (2004) and Steland (2005c).
The problem to detect changes in a linear model has been recently studied
by Horv\'ath et al. (2004) using CUSUMs of residuals.

Retrospective change-point detection allowing for time series data has been studied
quite extensively by many authors.  Kr\"amer and Ploberger (1992)
study partial sums of OLS regression residuals to detect structural changes.
Bai (1994) established weak convergence of the sequential
empirical process of ARMA($p,q$)-residuals and constructed a CUSUM-type statistic
to detect a change in the distribution of the innovations.
Noting that, e.g., ML estimates usually can be written as arithmetic means
of stationary martingale differences, Lee et al. (2003) studied a CUSUM procedure
to detect changes in parametric time series models.
For work on structural breaks and changes in the trend function
in integrated variables, we refer to Nyblom (1989), Perron (1991),
Vogelsang (1997), Hansen and Johansen (1997), and Bai, Lumsdaine and Stock (1998).
For further references to the extensive literature about these issues we
refer to the references given in these papers.

The paper is organised as follows. In Section~\ref{Model} we explain the
proposed monitoring procedure and basic assumptions. Functional central limit
theorems (FCLTs) under general conditions are given in Section~\ref{MainResults}. Change-point problems for a change from $I(0)$ to $ I(1)$,
and vice versa, are discussed in Section~\ref{SecCP}.
Section~\ref{Simulations} provides Monte Carlo results to assess the accuracy and
performance of the considered stopping times
demonstrating that the procedure works very reliable and
often can detect stationarity earlier than a fixed sample test, and that
using a weighting scheme improves the detection of a change-point.

\section{Preliminaries, method, and assumptions}
\label{Model}

We will use the following nonparametric definitions of the notions $ I(0) $ and $ I(1) $.
A  time series $ \{ Y_n \} $ is called $ I(0) $, denoted by $ Y_n \sim I(0) $, if
\begin{equation}
\label{RWDefI0}
  N^{-1/2} \sum_{i=1}^{ \trunc{ Ns } } Y_i \Rightarrow \sigma B(s), \qquad s \in [0,1],
\end{equation}
holds for some constant $ 0 < \sigma < \infty $. Here and throughout the paper
$ \trunc{x} $ denotes the floor function, $B(s),\ s \in [0,1],$ denotes Brownian motion, and
$ \Rightarrow $ stands for weak convergence in the space $ D[0,1] $ of all
right-continuous functions with left-hand limits equipped with the Skorohod topology
given by the Skorohod metric $ d $. For that approach to weak convergence we refer
to Billingsley (1968) and Prigent (2003).
In terms of mixing and moment conditions, a sufficient condition for
(\ref{RWDefI0}) is, e.g., that $ \{ Y_n \} $ is a stationary $ \alpha $-mixing sequence
with $ E |Y_1|^{2+\delta} < \infty $ and $ \sum_k \alpha(k)^{2/(2+\delta)} < \infty $
for some $ \delta > 0 $ where $ \alpha(k) $ are the mixing coefficients
(e.g. Herrndorf (1985)). Some of our limit theorems
assume (\ref{RWDefI0}) under  additional weak regularity conditions.
We will formulate these conditions where needed.

$ \{ Y_n \} $ is integrated of order $1$, denoted by $ Y_n \sim  I(1) $, if
\begin{equation}
\label{RWYRW}
  N^{-1/2} Y_{ \trunc{Ns} } \Rightarrow \sigma B(s), \qquad s \in [0,1],
\end{equation}
as $ N \to \infty $,
and the differences, $ \Delta Y_n = Y_n - Y_{n-1} $, form a $ I(0) $ series.
Note that our definition of $ I(0) $ does not necessarily implies stationarity and
allows for a certain degree of dependence. The $ I(1) $ property is also quite general,
covering classic random walks $ Y_n = \sum_{i=1}^n u_i $ with mean-zero
i.i.d. innovations  $ \{ u_t \} $, but, e.g., also allows for random walks with
dependent innovations $ u_t $ satisfying a functional central limit theorem
of the type (\ref{RWDefI0}). However, long memory processes in the
sense that $ \sum_{k=-\infty}^\infty | \Cov(Y_1, Y_{1+k}) | = \infty $ are not allowed.

In the literature the $ I(0) $ property often means that the time series is
a linear process, $ \sum_{j=0}^\infty \psi_j Z_{t-j} $, where $ \{ Z_n \} $ is a weak
white noise sequence and the parameter sequence $ \{ \psi_j \} $ is absolutely
summable with $ \sum_j \psi_j \not= 0 $. However, our definitions have been used
by many other researchers, e.g., Stock (1994b),
and are appropriate to describe the classes of time series
which can be distinguished by the methods studied in this paper.

Let us now assume that the time series observations $ Y_1, \dots, Y_N $, $ N \in \N $, arrive sequentially at ordered time points $ t_1, \dots, t_N $. To simplify
presentation we assume $ t_n = n \in \N $, but more general time designs can be
handled as in Steland (2005b).
It is known that a robust nonparametric unit root test is given by considering
the ratio of the dispersion of the cumulated observations and the dispersion
of the observations, cf. Kwiatkowski et al. (1992) or Breitung (2002). Having in mind
change-point models where the time series changes its $I(0)$ respectively $I(1)$ property
at some unknown time point, we introduce appropriate kernel weights to avoid that
past observations dominate the statistic. We first introduce a
sequential kernel-weighted variance-ratio process which is appropriate to detect $ I(0) $
processes, and will then describe a modification to detect $I(1)$.
Define\footnote{
In a previous version of this paper
we scaled numerator and denominator by powers of $ N^{-1} $ instead
of $ \trunc{Ns}^{-1} $. Simulations indicate that both version have very similar power
properties. Scaling with $ \trunc{Ns}^{-1} $ has the advantage that the
values of the process needed to calculate the stopping time do not depend on
the maximum sample size $N$, but requires to put $ U_N(s) = 0 $ for $ s \in [0,1/N) $.
}
$ U_N(s) = 0 $ for $ s \in [0,1/N) $ and
\begin{equation}
\label{DefUN}
  U_N(s) =
    \frac{ \trunc{Ns}^{-3} \sum_{i=1}^{ \trunc{Ns} }
      \left( \sum_{j=1}^i Y_j \right)^2 K_h( i - \trunc{Ns} ) }
                { \trunc{Ns}^{-2} \sum_{j=1}^{ \trunc{Ns} } Y_j^2 },
  \ s \in [1/N,1].
\end{equation}
$ K_h( \circ ) = K( \circ / h ) / h $, where $K$ is a Lipschitz continuous
density function with mean $0$ and finite variance,
and $ h = h_N > 0 $ is a sequence of bandwidth parameters satisfying
$$
  N/h_N \to \zeta \in [1,\infty),
$$
as $ N \to \infty $.
The definition of the kernel weights, $ K_h( i - \trunc{Ns} ) $, requires
only a kernel function $K$ defined on $ (-\infty,0] $. Thus, we can and will assume that
$K$ is symmetric around $0$, otherwise put $ K(z) = K(-z) $, $ z > 0 $, if $ K $ is only
defined for $ z \le 0 $.
Clearly, $ U_N $ depends on the bandwidth parameter $h$. If $ K $ has support $ [-1,1] $,
$ U_N $ is a function of the current and the most recent $h$ observations.
However, our results allow for kernels with unbounded support, e.g., the Gaussian kernel.
To apply the procedure, one chooses the time horizon $N$ and the bandwidth $h$,
puts $ \zeta = N/h $, and uses the asymptotic distributional results given
in the subsequent sections as approximations.

Although technically not required, one usually employs kernels $ K(z) $ which are
decreasing in $ |z| $ and satisfy $ \lim_{|z| \to \infty} K(z) = 0 $, to ensure that
past partial sums have smaller weights than more recent ones.
The technical role of the denominator is to estimate a nuisance parameter summarising the influence
of the dependence structure of the time series on the asymptotic distribution of
the numerator of $ U_N $ if $ \{ Y_n \} $ is $ I(1) $.

If $ \{ Y_n \} $ is $ I(0) $, the numerator of $ U_N $ has a different convergence
rate, and one should also modify the denominator of $ U_N $.
Following Kwiatkowski et al. (1992) and others, let
\begin{equation}
\label{DefUNT}
  \widetilde{U}_N(s) = N^{-1} \sum_{i=1}^{ \trunc{Ns} } \left( \sum_{j=1}^i Y_j \right)^2 K_h( i - \trunc{Ns} ) \biggl/ s^2_{Nm}(s),
  \qquad s \in [0,1],
\end{equation}
where
$$
  s^2_{Nm}(s) = \frac{1}{N} \sum_{i=1}^{\trunc{Ns}} Y_i^2
    + 2 \sum_{k=1}^m w(k,m) \frac{1}{N} \sum_{i=1}^{\trunc{Ns}} Y_i Y_{i+k},
  \qquad s \in [0,1],
$$
is the process version of the Newey-West HAC estimator. $ w(k,m) $ is a
weighting function. One may use the Bartlett window, $ w(k,m) = 1 - k/m $,
as in Newey and West (1987) which guarantees nonnegativity of $ s^2_{Nm}(s) $.
For consistency the rate $ m = o( N^{1/2} ) $ suffices under general conditions,
see Andrews (1991) where also various choices of the weighting function are discussed.
As shown in Giraitis et al. (2003), for
Bartlett weights the rate $ m = o(N) $ suffices under certain conditions.

{\em Sequential $I(0)$ detection:} Assume the time series is $ I(1) $ before the change-point and $ I(0) $ after the change.
Noting that large values of $ U_N(s) $ provide evidence
for the unit root hypothesis whereas small values indicate $I(0)$, we propose
the stopping time
$$
  R_N = R_N(c) = \min \{ k \le n \le N : U_N( n / N ) < c \},
$$
with the convention $ \min \emptyset = N $, for some critical value (control limit) $ c $.
$k$ denotes the start of monitoring. As supported by our simulations,
one should choose $ k > 1 $ sufficiently
large to avoid that the procedure starts with only a few observations.
Thus, it is reasonable to assume that
\begin{equation}
\label{StartMonitoring}
  k = \trunc{ \kappa N }, \qquad \text{for some $ \kappa \in (0,1) $},
\end{equation}
although some of our theoretical results do not require that condition.
The related fixed-sample test rejects the null hypothesis $ H_0 $ that $ \{ Y_n \} $ is a
$ I(1) $ process in favour of the alternative $ H_a $ that the time series is $ I(0) $
if $ R_N < N $. The associated type I error rate is $ P_0( R_N < N ) $,
where $ P_0 $ indicates that the
probability is calculated assuming $ H_0 $, i.e., $ Y_n \sim I(1) $.
We propose to select $ c $ as follows. First fix size $ \alpha \in (0,1) $. Then choose $c$
such that the associated fixed-sample test has type I error rate $ \alpha $,
i.e., $ P_0( R_N < N ) = \alpha $. Our asymptotic results can be used to
obtain large sample approximations for $c$.

Noting that many classes of stationary time series considered in practice satisfy the
$ I(0) $ property (\ref{RWDefI0}), the proposed detection rule can be used to detect
stationarity, if the application suggests to consider the class of stationary $I(0)$
time series.

{\em Sequential $I(1)$ (unit root) dection:} Assume the time series starts as a (subset of a)
stationary $ I(0) $ process which ensures that the Newey-West estimator is consistent
(for conditions see Theorem~\ref{RWUnitRoot1} (ii)),
and changes its behavior to a $ I(1) $ process at a change-point (structural break).
To detect the change one may use the stopping time
$$
  \widetilde{R}_N = \widetilde{R}_N(c) = \min \{ k \le n \le N : \widetilde{U}_N( n/N ) > c \}
$$
for some critical value $ c $.
The associated fixed-sample test rejects the null hypothesis
$ H_0 : I(0) $-stationarity in favour of $ H_a : I(1) $-unit root,
if $ \widetilde{R}_N < N $.  Again, one may choose the control limit $ c $ to ensure
that the type I error $ P_0( \widetilde{R}_N < N ) $ attains a nominal value $ \alpha $.
Note that now $ P_0 $ indicates that the probability has to be calculated assuming
that $ Y_n \sim I(0) $.

\section{Asymptotic results for $ I(0) $ and $ I(1) $ processes}
\label{MainResults}

In this section we provide the asymptotic distribution theory of the processes $ U_N $
and $ \widetilde{U}_N $ and the related stopping times $ R_N $ and $ \widetilde{R}_N $
by establishing FCLTs under various basic distributional assumptions of interest. Particularly, these results can be used to obtain approximate critical values by simulating
from the limiting law and also justify to simulate the procedures using
normally distributed error terms.

\subsection{Asymptotics for $ I(1) $ processes}

The following result provides the asymptotic distribution of $ U_N $ under the random walk hypothesis that the time series $ \{ Y_n \} $ is $ I(1) $. The result in
Breitung (2002, Proposition~3, p. 349) is obtained as a special case by letting
$ K_h(\cdot) = 1 $ and $ s = 1 $. In general,
the asymptotic distribution is a functional of the Brownian motion, the kernel $ K $, and
the parameter $ \zeta = \lim_{N \to \infty} N/h_N $.

\begin{theorem}
\label{RWUnitRoot0}
Assume $ \{ Y_n \} $ is $ I(1) $ in the sense of (\ref{RWYRW}), then
\begin{equation}
\label{RWUnitRoot0a}
  U_N(s) \Rightarrow \calU_1(s) = \frac{ \zeta s^{-1} \int_0^s K(\zeta(r-s)) \left[ \int_0^r B(t) \, dt \right]^2 \, dr }
    { \int_0^s B(r)^2 \, dr },
\end{equation}
in $ D[\kappa,1] $, as $ N \to \infty $ with $ N/h \to \zeta $. The
process $ \calU_1 $ has continuous sample paths w.p. $1$.
\end{theorem}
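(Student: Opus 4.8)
The plan is to prove the FCLT by (i) identifying the scaled partial-sum process of $\{Y_n\}$ as the driver, (ii) writing both numerator and denominator of $U_N(s)$ as continuous functionals of that process plus asymptotically negligible remainders, and (iii) invoking the continuous mapping theorem. Since $\{Y_n\}$ is $I(1)$ in the sense of (\ref{RWYRW}), set $W_N(r) = N^{-1/2} Y_{\trunc{Nr}}$, so that $W_N \Rightarrow \sigma B$ in $D[0,1]$. Note that $N^{-1/2}\sum_{j=1}^{\trunc{Nr}} Y_j$ is a Riemann-sum approximation to $N^{1/2}\int_0^r W_N(t)\,dt$, and $\sup_{r}\bigl| N^{-3/2}\sum_{j=1}^{\trunc{Nr}} Y_j - N^{1/2}\int_0^r W_N(t)\,dt \bigr| \to 0$ in probability (this uses (\ref{RWYRW}) together with tightness of $W_N$, not merely its marginal convergence, to control the oscillation of the Riemann sums uniformly). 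Hence the partial-sum process $V_N(r) := N^{-3/2}\sum_{j=1}^{\trunc{Nr}} Y_j$ satisfies $V_N \Rightarrow \sigma \int_0^\cdot B(t)\,dt$ in $D[0,1]$, and the pair $(W_N, V_N)$ converges jointly since $V_N$ is a fixed continuous functional of $W_N$ up to the negligible remainder.

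Next I would rewrite the denominator: $\trunc{Ns}^{-2}\sum_{j=1}^{\trunc{Ns}} Y_j^2 = (N/\trunc{Ns})^2 \cdot N^{-1}\sum_{j=1}^{\trunc{Ns}}(N^{-1/2}Y_j)^2$, and by the standard $I(1)$ argument (square of a convergent process, integrated) this tends to $(1/s)^2 \cdot \sigma^2 \int_0^s B(r)^2\,dr \cdot$ — more precisely, $N^{-2}\sum_{j=1}^{\trunc{Ns}} Y_j^2 \Rightarrow \sigma^2\int_0^s B(r)^2\,dr$ uniformly in $s \in [\kappa,1]$, so the denominator converges to $s^{-2}\cdot s^2 \cdot \sigma^2\int_0^s B(r)^2 \, dr$ — let me be careful: $\trunc{Ns}^{-2}\sum = (N^2/\trunc{Ns}^2)(N^{-2}\sum)$ and $N^2/\trunc{Ns}^2 \to s^{-2}$, giving limit $s^{-2}\sigma^2\int_0^s B(r)^2\,dr$. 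For the numerator, write
\[
  \trunc{Ns}^{-3}\sum_{i=1}^{\trunc{Ns}}\Bigl(\sum_{j=1}^i Y_j\Bigr)^2 K_h(i-\trunc{Ns})
  = \frac{N^3}{\trunc{Ns}^3}\cdot\frac{1}{N}\sum_{i=1}^{\trunc{Ns}} V_N(i/N)^2\,\frac{1}{h}K\!\Bigl(\frac{i-\trunc{Ns}}{h}\Bigr),
\]
using $N^{-3}(\sum_{j=1}^i Y_j)^2 = V_N(i/N)^2$ and $K_h(\cdot) = h^{-1}K(\cdot/h)$. Since $N/h \to \zeta$, the factor $h^{-1}K((i-\trunc{Ns})/h)$ equals $(\zeta/N)(1+o(1)) K(\zeta(i/N - s)(1+o(1)))$, so the sum $N^{-1}\sum_{i=1}^{\trunc{Ns}}(\cdots)$ is a Riemann sum for $\zeta\int_0^s K(\zeta(r-s)) V_N(r)^2\,dr$. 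Combined with $N^3/\trunc{Ns}^3 \to s^{-3}$, the numerator converges to $s^{-3}\zeta\int_0^s K(\zeta(r-s))\,\sigma^2\bigl[\int_0^r B(t)\,dt\bigr]^2\,dr$. Dividing, the $\sigma^2$ and the powers of $s$ combine to give exactly $\calU_1(s)$ as in (\ref{RWUnitRoot0a}) — the $s^{-3}$ over $s^{-2}$ leaves the $s^{-1}$ displayed there. The continuous-mapping step then yields weak convergence in $D[\kappa,1]$, provided the denominator functional $B \mapsto \int_0^\cdot B(r)^2\,dr$ is bounded away from $0$ on $[\kappa,1]$ a.s., which holds since $\int_0^\kappa B(r)^2\,dr > 0$ w.p. $1$; the map $B \mapsto \calU_1$ is continuous on the set where this positivity holds, and $B$ lands in that set almost surely.

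The main obstacle is controlling the Riemann-sum approximations \emph{uniformly in $s$}, both for $V_N$ (replacing the inner partial sums by an integral) and for the kernel-weighted outer sum (replacing the discrete convolution by $\zeta\int_0^s K(\zeta(r-s))(\cdots)\,dr$, including the uniform handling of the argument rescaling when $N/h \to \zeta$ with possibly $h_N \ne N/\zeta$ exactly, and the uniform integrability / tail control needed when $K$ has unbounded support, e.g. the Gaussian kernel). This requires an equicontinuity/tightness argument: one shows the difference between $U_N(s)$ and the functional $g_N(s) := $ [the integral expression with $W_N$ in place of $B$] is $o_P(1)$ uniformly on $[\kappa,1]$, using Lipschitz continuity of $K$ and the oscillation modulus of $W_N$, which is tight. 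Once that is in place, $g_N = \Phi(W_N)$ for a fixed continuous $\Phi \colon D[0,1] \to D[\kappa,1]$, and $W_N \Rightarrow \sigma B$ finishes the argument via the continuous mapping theorem and Slutsky. Finally, continuity of the sample paths of $\calU_1$ is immediate: $B$ has continuous paths w.p. $1$, so $r \mapsto \int_0^r B(t)\,dt$ is $C^1$, and $s \mapsto \int_0^s K(\zeta(r-s))[\int_0^r B]^2\,dr$ is continuous by dominated convergence (using continuity of $K$ and local boundedness of the integrand), while the denominator is continuous and strictly positive on $[\kappa,1]$; hence the ratio is continuous.
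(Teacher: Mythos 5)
Your proposal is correct and follows essentially the same route as the paper: both numerator and denominator are written, up to uniformly negligible remainders, as continuous functionals of the scaled process $ N^{-1/2} Y_{\trunc{N\cdot}} $, and the continuous mapping theorem is applied to the ratio. Your handling of joint convergence (deriving both components from the single driving process $W_N$, so that joint convergence is automatic) is if anything cleaner than the paper's Cram\'er--Wold-plus-tightness argument, and your explicit check that the limiting denominator is a.s. positive on $[\kappa,1]$ makes precise a point the paper leaves implicit.
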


\begin{proof} Clearly, we have
  \begin{eqnarray*}
    X_{1N}(s) &=& \trunc{Ns}^{-2} \sum_{i=1}^{ \trunc{Ns} } Y_i^2
      \Rightarrow \sigma^2 s^{-2} \int_0^s B(r)^2 \, dr = X_1(s).
   \end{eqnarray*}
   Since $ K $ is  Lipschitz continuous and $ N/h \to \zeta $, a more involved argument
   using the Skorohod/Dudley/Wichura theorem shows that
   $$
     X_{2N}(s) =
      \trunc{Ns}^{-3} \sum_{i=1}^{ \trunc{Ns} } \left( \sum_{j=1}^i Y_j \right)^2 K_h(t_i-t_{ \trunc{Ns} })
   $$
   can be approximated by a continuous functional of $ N^{-1/2} Y_{\trunc{Ns}} $, and
   therefore
   $$
   X_{2N}(s)
     \Rightarrow
     \zeta s^{-3} \sigma^2 \int_0^s \left( \int_0^r B(t) \, dt \right)^2 K( \zeta(r-s) ) \, dr = X_2(s),
   $$
  as $ N \to \infty $.
  However, to conclude weak convergence of $ X_{2N}(s) / X_{1N}(s) $ we need joint
  weak convergence of the pair $ (X_{1N}(s), X_{2N}(s)) $ in the space $ (D[\kappa,1])^2 $.
  By the Skorohod/Dudley/Wichura
  theorem we may assume that the convergence of $ X_{1N}(s) $ and $ X_{2N}(s) $ is in the supnorm.
  First, note that  the finite-dimensional distributions
  of $ \lambda_1 X_{1N} + \lambda_2 X_{2N} $ converge to the corresponding
  finite-dimensional distributions of $ \lambda_1 X_1 + \lambda_2 X_2 $, as
  $ N \to \infty $, for any scalars $ (\lambda_1,\lambda_2) \in \R^2 $. Further, clearly,
  the sequence
  $ \{ (X_{1N}(s),X_{2N}(s)) : s \in [\kappa,1] \} $, $ N \ge 1 $, is tight, since both
  coordinate processes are tight. We obtain {\em joint} weak convergence
  $ (X_{1N},X_{2N}) \Rightarrow (X_1,X_2) $, $N \to \infty $, as elements of
  the function space $ [ D([\kappa,1]) ]^2 $. Now we can apply
  the continuous mapping theorem (CMT) to obtain $ X_{2N}/X_{1N} \Rightarrow X_2 / X_1 $,
  $ N \to \infty $. Since $K$ is Lipschitz continuous and integration is continuous, numerator
  and denominator are continuous functions of $s$, w.p. $1$. Hence $
  \calU_1(s) $ has continuous sample paths w.p. $1$.
\end{proof}

\begin{remark} Observe that the theorem can be slightly extended to yield $ U_N(s) \Rightarrow \calU_1(s) $, as $ N \to \infty $, in $ D[0,1] $, if $ \, \calU_1(s) $ is defined for $ s \in (0,1] $ by the
right side of (\ref{RWUnitRoot0a}), and $ \calU_1(0) = 0 $. Nevertheless, in
Theorem~\ref{RWUnitRoot0}, and also in the sequel, for
$ U_N(s), \widetilde{U}_N(s), R_N, $ and $ \widetilde{R}_N $ we consider
weak convergence in the space $ D[\kappa,1] $, which suffices for our
purposes.
\end{remark}

In practical applications the time series is sometimes first demeaned or detrended.
This alters the asymptotic distribution as follows.

\begin{remark}
\label{Rem1} Suppose the procedure is applied to the residuals instead of the
original observations. For applications the most important cases are that the
sample $ Y_1, \dots, Y_N $ is centered at its mean or detrended. In the former case
$ Y_i $ is replaced by
  $$
    \widehat{\epsilon}_i = Y_i - N^{-1} \sum_{i=1}^N Y_i, \qquad  i = 1, \dots, N, \qquad \text{{\em ('demeaned')}},
  $$
  whereas in the latter one uses
  $$
    \widehat{\epsilon}_i = Y_i - \widehat{\beta}_0 - \widehat{\beta}_1 i, \qquad i = 1, \dots, N, \qquad
     \text{{\em ('detrended')}},
  $$
  where $ \widehat{\beta}_0 $ and $ \widehat{\beta}_1 $ are the OLS estimators from a regression of $ Y_i $
  on the regressors $ (1,i) $. Then the Brownian motion $ B $ in the formula
 for $ \calU_1 $ has to be replaced by
  the tied-down Brownian motion (Brownian bridge)
  $ B^{\mu}(s) = B(s) - s B(1) $ when demeaning
  and
  $$
    B^t(s) = B(s) - (4-6s) \int_0^1 B(r) \, dr - (12 s - 6) \int_0^1 r B(r) \, dr,
      \qquad s \in [0,1],
  $$
  when detrending.
\end{remark}

\begin{corollary} Under the conditions of Theorem~\ref{RWUnitRoot0},
$$
  N^{-1} R_N \stackrel{d}{\to} \min \{ \kappa \le s \le 1 : \calU_1(s) < c \},
  \qquad N \to \infty.
$$
\end{corollary}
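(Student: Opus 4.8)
The plan is to recognise $N^{-1}R_N$ as the value of a first-passage functional applied to the process $U_N$, and to transport the weak convergence $U_N \Rightarrow \calU_1$ of Theorem~\ref{RWUnitRoot0} through that functional by a continuous-mapping argument. Define $\Psi : D[\kappa,1] \to [\kappa,1]$ by $\Psi(x) = \inf\{ s \in [\kappa,1] : x(s) < c \}$, with the convention $\inf\emptyset := 1$. Because $U_N(\cdot)$ is right-continuous and constant on each interval $[n/N,(n+1)/N)$, a direct check shows $|N^{-1}R_N - \Psi(U_N)| \le 1/N$ (the two actually coincide except when $R_N$ equals the initial monitoring index $\trunc{\kappa N}$, where the boundary grid point $\trunc{\kappa N}/N$ may sit just below $\kappa$). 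Hence it suffices to show $\Psi(U_N) \stackrel{d}{\to} \Psi(\calU_1) = \min\{\kappa \le s \le 1 : \calU_1(s) < c\}$.

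For this I would use the continuous mapping theorem in its extended form (see Billingsley, 1968): since $U_N \Rightarrow \calU_1$ in $D[\kappa,1]$ and $\calU_1$ has continuous sample paths w.p.\ $1$ by Theorem~\ref{RWUnitRoot0}, it is enough to verify $P(\calU_1 \in D_\Psi) = 0$, where $D_\Psi$ is the set of discontinuity points of $\Psi$ for the Skorohod metric. On $C[\kappa,1]$, where the Skorohod and uniform topologies agree, it is elementary that $\Psi$ is continuous at $x$ as soon as (a) $x(s) > c$ for every $s \in [\kappa, \Psi(x))$ --- the level $c$ is not merely touched before the first time the path drops below it --- and (b) if $\Psi(x) < 1$ then $\inf_{\Psi(x) \le s \le \Psi(x)+\eta} x(s) < c$ for all $\eta > 0$ (the path really crosses strictly below $c$ right after $\Psi(x)$), while if $\Psi(x) = 1$ then $\min_{s\in[\kappa,1]} x(s) > c$. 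Thus $D_\Psi \cap C[\kappa,1]$ is contained in the set of continuous functions that are either constant at the value $c$ on some subinterval of $[\kappa,1]$, or attain a local minimum exactly equal to $c$ at a point no later than their first down-crossing of $c$.

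It remains to prove $P(\calU_1 \in D_\Psi) = 0$, which is the heart of the matter. The possibility that $\calU_1$ is constant at the value $c$ on a subinterval is quickly excluded: the denominator $\int_0^s B(r)^2\,dr$ is strictly increasing in $s$ w.p.\ $1$, and $\calU_1 \equiv c$ on a nondegenerate interval would force the pathwise identity, numerator $= c\int_0^{\cdot} B(r)^2\,dr$ there, an event of probability zero since $B$ is Gaussian. The genuine obstacle is to show that w.p.\ $1$ the path $\calU_1$ is not tangent to the level $c$ from above at or before its first down-crossing: on $\{\Psi(\calU_1) = t_0 < 1\}$ continuity forces $\calU_1(t_0) = c$, and the only bad configuration left is a local minimum of value $c$ at $t_0$ with $\calU_1 \ge c$ just afterwards (and, for the endpoint case, $\min_{[\kappa,1]}\calU_1 = c$). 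I would handle this by conditioning on $(B(t))_{t \le t_0}$ and exploiting the absolute continuity, inherited from the Gaussian increments of $B$ together with the a.s.\ strict positivity of $\int_0^{t_0} B(r)^2\,dr$, of the one-dimensional law describing the behaviour of $\calU_1$ immediately after $t_0$; this renders the tangency event null. Once $P(\calU_1 \in D_\Psi) = 0$ is in hand, the extended continuous mapping theorem gives $\Psi(U_N) \stackrel{d}{\to} \Psi(\calU_1)$, and combining this with $|N^{-1}R_N - \Psi(U_N)| \le 1/N$ yields the claimed limit $N^{-1}R_N \stackrel{d}{\to} \min\{\kappa \le s \le 1 : \calU_1(s) < c\}$, with the convention that this minimum equals $1$ when no such $s$ exists, matching $R_N = N$. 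The reduction and the flatness exclusion are routine; the remaining discontinuity-set estimate --- that $\calU_1$ a.s.\ meets level $c$ \emph{regularly} --- is the step I expect to require genuine work.
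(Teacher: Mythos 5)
Your reduction of $N^{-1}R_N$ to the first-passage functional $\Psi$ and your description of the discontinuity set $D_\Psi$ are fine, but the argument is incomplete exactly where you yourself flag it: the claim that $\calU_1$ almost surely does not touch the level $c$ tangentially at or before its first down-crossing (together with the endpoint case $\min_{[\kappa,1]}\calU_1=c$) carries essentially all the content of the corollary, and the sketch you offer does not establish it. The tangency event concerns the behaviour of the path on a whole interval $[t_0,t_0+\eta]$ after the random first touching time $t_0$, so ``absolute continuity of the one-dimensional law describing the behaviour immediately after $t_0$'' is not the right object to invoke; even for Brownian motion the fact that a first hitting time of a level is immediately followed by strict crossings is a nontrivial path property, and for the ratio functional $\calU_1$ no local analysis is supplied. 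As written, $P(\calU_1\in D_\Psi)=0$ is asserted rather than proved.

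The paper takes a route that avoids this pathwise statement altogether. It rewrites $\{N^{-1}R_N>x\}$ as an event about the running extremum of $U_N$ over $[\kappa,x]$ (here the running infimum, since $R_N$ signals when $U_N$ drops below $c$), applies the continuous mapping theorem to that functional --- which is continuous on $D[\kappa,1]$, so no discontinuity-set analysis is needed --- and thereby reduces the problem to showing that, for each fixed $x$, the real random variable $V(x)$ has no atom at $c>0$. That one-dimensional absolute-continuity statement is supplied by Lifshits (1982, Theorem 2) for extrema of processes with paths in a separable Banach space, using that $\calU_1\in C$ w.p.\ $1$; convergence of the distribution functions of $N^{-1}R_N$ at every $c>0$ then follows. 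The cleanest repair of your proof is to replace the pathwise no-tangency claim by this distribution-function argument; otherwise you must genuinely prove that $c$ is a.s.\ a regular level for $\calU_1$, which is a harder statement than the corollary requires.
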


\begin{proof} Note that
$$
  N^{-1} R_N > x \Leftrightarrow \sup_{s \in [\kappa,x]} U_N(s) \ge c
$$
By the CMT
$$
  V_N(x) = \sup_{s \in [\kappa, x]} U_N(s)
  \Rightarrow
  \sup_{s \in [\kappa,x]} \calU_1(s) = V(x),
$$
$ N \to \infty $, which implies that
for all $ x \in \R $ and all continuity points $ c > 0 $ of the
distribution function of $ V(x) $ we have
$$
  \lim_{N\to\infty} F_N(x) = \lim_{N\to\infty} P( N^{-1} R_N \le x )
  =  P( \sup_{s \in [\kappa,x]} \calU_1(s) < c ) = F(x).
$$
It remains to check whether $ V(x) $ may have atoms. Since $ \calU_1 \in C[0,1] $
w.p. $1$, we may work in the separable Banach space $ ( C[0,1], \| \circ \|_\infty )$
and can apply Lifshits (1982, Theorem 2) which asserts that the distribution of $ V(x) $
can have an atom only at the point
$$
  \gamma_x = \sup_{0 \le t \le x:\ \operatorname{Var} \calU_1(t) = 0}E( \calU_1(t) ) = 0,
$$
equals $0$ on $ (-\infty, \gamma_x) $, and is absolutely continuous with respect to
Lebesgue measure on $ [\gamma_x, \infty) $. Hence all $ c > 0 $ are continuity points
of $ V(x) $.
\end{proof}

\subsection{Asymptotics for $ I(0) $ processes}

For weakly stationary $I(0)$ time series $ \{ Y_n \} $ satisfying a certain condition
on the fourth-order moments, the variance-ratio process $ U_N(s) $
still has a non-degenerate limiting distribution if scaled with $ N $. However,
the limit depends on a nuisance parameter summarizing the dependence structure.
We consider both mean-zero $ I(0) $
processes and $ I(0) $ processes which are disturbed by a local deterministic alternative.

For the process $ \widetilde{U}_N(s) $ using a Newey-West type estimator to eliminate
the nuisance parameter from the limit distribution, we use a weak mixing condition.

The first result considers stationary $I(0)$ processes. The limit
given in Kwiatkowski et al. (1992, formula 14) is obtained as a special case if
$ K_h(\cdot) = 1 $, $ \zeta = 1 $, and $ s = 1 $.

\begin{theorem}
\label{RWUnitRoot1}
\begin{itemize}
\item[(i)]
Assume $ \{ Y_n \} $ is a weakly stationary mean zero $ I(0) $ process such that
$ \{ Y_n^2 \} $ is weakly stationary,
\begin{equation}
\label{Cond4}
  \gamma_2(k) = E( Y_1^2 Y_{1+k}^2 ) \to 0, \qquad |k| \to \infty.
\end{equation}
Then
$$
  \trunc{Ns} U_N(s)
    \Rightarrow
   \calU_2(s) = \frac{\sigma^2}{EY_1^2} s^{-1} \zeta \int_0^s B(r)^2 K( \zeta(r-s) ) \, dr,
$$
in $ D[\kappa,1] $, as $ N \to \infty $. The process $ \calU_2 $ has continuous sample paths w.p. $1$.
\item[(ii)] Assume $ \{ Y_n \} $ is a strictly
stationary $ \alpha $-mixing $ I(0) $ process such that $ E Y_1^{4 \nu} < \infty $ and
\begin{equation}
\label{Mixing2}
  \sum_{j=1}^\infty j^2 \alpha(j)^{(\nu-1)/\nu} < \infty,
\end{equation}
for some $ \nu > 1 $. Then, if $ m/N^{1/2} = o(1) $,
$$
  \widetilde{U}_N(s) \Rightarrow \widetilde{\calU}_2(s) = s^{-1} \zeta \int_0^s B(r)^2 K( \zeta(r-s) ) \, dr,
$$
in $ D[\kappa,1] $, as $ N \to \infty $.
\end{itemize}
\end{theorem}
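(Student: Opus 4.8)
The plan is to treat both parts with the template already used in the proof of Theorem~\ref{RWUnitRoot0}: write the rescaled statistic as a ratio, show that the (rescaled) numerator is an asymptotically continuous functional of the $I(0)$ partial-sum process and converges weakly in $D[\kappa,1]$ via a Skorohod/Dudley/Wichura representation plus a Riemann-sum approximation, show that the (rescaled) denominator converges in probability, uniformly on $[\kappa,1]$, to a \emph{deterministic} continuous function bounded away from $0$, and then conclude by joint weak convergence and the continuous mapping theorem (CMT). The only genuinely new ingredients are the correct normalisations and deterministic limits and, for (ii), the uniform-in-$s$ consistency of the process version of the Newey--West estimator.

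\emph{Part (i).} Put $X_{1N}(s) = N \trunc{Ns}^{-2} \sum_{j=1}^{\trunc{Ns}} Y_j^2$ and $X_{2N}(s) = N \trunc{Ns}^{-2} \sum_{i=1}^{\trunc{Ns}} ( \sum_{j=1}^i Y_j )^2 K_h(i - \trunc{Ns})$, so that $\trunc{Ns} U_N(s) = X_{2N}(s)/X_{1N}(s)$. For $X_{1N}$ I would invoke a weak law of large numbers for $\{Y_n^2\}$: weak stationarity of $\{Y_n^2\}$ together with (\ref{Cond4}) gives $N^{-1}\sum_{j=1}^{\trunc{Ns}} Y_j^2 \to s\,EY_1^2$ in probability, uniformly in $s\in[\kappa,1]$ (the uniformity because the partial-sum process of $Y_j^2-EY_1^2$ has $L^2$-norm tending to $0$ and the centred limit $s\mapsto sEY_1^2$ is continuous), hence $X_{1N}(s)\to s^{-1}EY_1^2$ uniformly in probability. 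For $X_{2N}$ I would use the FCLT (\ref{RWDefI0}) in the Skorohod representation so that $\sup_{r\in[0,1]}| N^{-1/2}\sum_{j=1}^{\trunc{Nr}}Y_j - \sigma B(r)| \to 0$ a.s., replace $(\sum_{j=1}^i Y_j)^2$ by $N\sigma^2 B(i/N)^2$ with an error controlled by the a.s.\ boundedness of $B$ on $[0,1]$ and the bound $\sum_{i=1}^{\trunc{Ns}} K_h(i-\trunc{Ns}) = O(1)$, and then recognise $N^2\sigma^2\trunc{Ns}^{-2}\sum_i B(i/N)^2 K_h(i-\trunc{Ns})$ as a Riemann sum which, since $K$ is Lipschitz, $B$ is a.s.\ uniformly continuous, and $N/h\to\zeta$, converges a.s.\ uniformly on $[\kappa,1]$ to $\sigma^2 s^{-2}\zeta\int_0^s B(r)^2 K(\zeta(r-s))\,dr$. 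This yields $X_{2N}\Rightarrow X_2$ in $D[\kappa,1]$. Since $X_{1N}$ converges in probability to the deterministic continuous limit $X_1(s)=s^{-1}EY_1^2$, which is $\ge EY_1^2>0$ on $[\kappa,1]$, the pair converges jointly in $(D[\kappa,1])^2$, and the map $(f,g)\mapsto g/f$ is continuous where $f$ is bounded away from $0$; the CMT gives $\trunc{Ns}U_N(s)\Rightarrow X_2(s)/X_1(s)=\calU_2(s)$, whose path continuity follows from that of $B$, the Lipschitz property of $K$, and continuity of $s^{-1}$ and of the integral in $s$.

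\emph{Part (ii).} The numerator $N^{-1}\sum_{i=1}^{\trunc{Ns}}(\sum_{j=1}^i Y_j)^2 K_h(i-\trunc{Ns})$ is treated by exactly the same Skorohod-representation/Riemann-sum argument as $X_{2N}$ (only the overall power of $N$ differs), giving convergence in $D[\kappa,1]$ to $\sigma^2\zeta\int_0^s B(r)^2K(\zeta(r-s))\,dr$, where $\sigma^2=\sum_{k\in\Z}\gamma_Y(k)$, $\gamma_Y(k)=\Cov(Y_1,Y_{1+k})$, is the long-run variance appearing in (\ref{RWDefI0}). For the denominator I would show $s^2_{Nm}(s)\to s\,\sigma^2$ in probability, uniformly on $[\kappa,1]$: this is the consistency of the process version of the Newey--West/HAC estimator, and under strict stationarity, the mixing condition (\ref{Mixing2}), the moment bound $EY_1^{4\nu}<\infty$, Bartlett weights $w(k,m)=1-k/m$, and the rate $m=o(N^{1/2})$ it follows along the lines of Andrews (1991) and Giraitis et al.\ (2003), the factor $s$ arising because the sums run to $\trunc{Ns}$. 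The limit $s\mapsto s\sigma^2$ is again deterministic, continuous, and bounded away from $0$ on $[\kappa,1]$, so the CMT applied to numerator over denominator gives $\widetilde{U}_N(s)\Rightarrow s^{-1}\zeta\int_0^s B(r)^2 K(\zeta(r-s))\,dr=\widetilde{\calU}_2(s)$ in $D[\kappa,1]$.

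\emph{Main obstacle.} In part (i) the delicate point is, as already in Theorem~\ref{RWUnitRoot0}, the passage from the discrete kernel-weighted sum of squared partial sums to the continuous functional of $B$, i.e.\ the uniform-in-$s$ Riemann-sum approximation under only Lipschitz continuity of $K$ and the bandwidth condition $N/h\to\zeta$, together with the uniform law of large numbers for $\{Y_n^2\}$ deduced from the weak hypothesis (\ref{Cond4}). In part (ii) the principal difficulty is the \emph{uniform-in-$s$} consistency of $s^2_{Nm}(s)$: one must control the $m=m_N\to\infty$ lag-covariance estimators $N^{-1}\sum_{i=1}^{\trunc{Ns}}Y_iY_{i+k}$ simultaneously over $k\le m$ and $s\in[\kappa,1]$ and bound the truncation bias $\sum_{k>m}\gamma_Y(k)$, which is precisely where the mixing rate (\ref{Mixing2}), the moment condition, and $m=o(N^{1/2})$ enter; the remaining steps are routine.
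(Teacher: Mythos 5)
Your proposal is correct and follows essentially the same route as the paper: a Riemann-sum/Skorohod-representation argument for the kernel-weighted numerator, a weak law of large numbers (resp.\ Andrews-type HAC consistency with limit $s\sigma^2$) for the denominator, joint convergence because the denominator limit is deterministic, and the continuous mapping theorem. If anything, you are slightly more explicit than the paper about the uniformity in $s$ of the denominator's convergence, which the paper only treats pointwise before invoking Billingsley's Theorem 4.4.
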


\begin{proof} By assumption
$
  N^{-1/2} \sum_{i=1}^{\trunc{Ns}} Y_i \Rightarrow \sigma B(s),
$
as $ N \to \infty $, where $ \sigma^2 = \sum_{k=-\infty}^\infty E(Y_1 Y_{1+k} ) $.
By the weak law of large numbers (Brockwell and Davis (1991), Theorem 7.1.1),
(\ref{Cond4}) implies that for fixed $ s \in [\kappa,1] $
$$
  V_{N1}(s) = \trunc{Ns}^{-1} \sum_{i=1}^{\trunc{Ns}} Y_i^2 \stackrel{P,L_2}{\to} E Y_1^2 = V_1,
$$
as $ N \to \infty $, where $ \stackrel{P,L_2}{\to} $ means that the convergence holds
in probability and in quadratic mean, i.e., in the $L_2$-space. The limit is
a.s. constant in $s$. Further,
\begin{eqnarray*}
  V_{N2}(s) &=& \frac{N}{\trunc{Ns}} N^{-1} \sum_{i=1}^{\trunc{Ns}}
   \left( \sum_{j=1}^i Y_j \right)^2 K_h( t_i - t_{\trunc{Ns}})  \\
  &=& \frac{N}{\trunc{Ns}}
        (N/h) \int_0^s \left( N^{-1/2} \sum_{j=1}^{\trunc{Nr}} Y_j \right)^2
          K( \trunc{Nr}/h - \trunc{Ns}/h ) \, dr \\
  & \Rightarrow &
    \sigma^2 s^{-1} \zeta  \int_0^s K(\zeta(r-s)) B(r)^2 \, dr = V_2(s),
\end{eqnarray*}
if $ N \to \infty $. Billingsley (1968, Theorem 4.4) now implies
weak convergence of the pair $ (V_{N1}, V_{N2}) $, and an application
of the CMT yields $ V_{N2} / V_{N1} \Rightarrow V_2 / V_1 = \calU_2 $, as
$ N \to \infty $.
Since $ K $ is Lipschitz continuous, the process $ \calU_2 $ has continuous sample paths w.p. $1$. To show (ii) the proof is modified as follows. By Andrews (1991, Lemma~1)
the mixing condition (\ref{Mixing2}) ensures his Assumption A.
Hence, if $ m/N^{1/2} = o(1) $,
$ s^2_{Nm}(s) \stackrel{P}{\to} s \sigma^2 $, as $ N \to \infty $, which implies
weak convergence to the non-stochastic function $ s \sigma^2 $, $ s \in [\kappa,1] $.
\end{proof}

\begin{remark} Statement (i) implies that the $ U_N $
statistic is consistent against stationary alternatives.
\end{remark}

\begin{remark}
Consistency of $ s_{Nm}^2 $ for Bartlett weights
has also been shown under the weaker condition $ m/N = o(1) $ provided that
$ \sum_j | \gamma_j | < \infty $, $ \gamma_j = \Cov(Y_1,Y_{1+j}), $ and
$$
  \sup_h \sum_{r,s = -\infty}^{\infty} | \kappa(h,r,s) | < \infty
$$
where
$$
  \kappa(h,r,s) = E[(Y_k-\mu)(Y_{k+h}-\mu)(Y_{k+r}-\mu)(Y_{k+s}-\mu)]
   - ( \gamma_h \gamma_{r-s} + \gamma_r \gamma_{h-s} + \gamma_s \gamma_{h-s} )
$$
is the fourth order cumulant (Giraitis et al. (2003), Theorem~3.1.). This condition holds, e.g.,
for linear processes with absolutely summable coefficients.
\end{remark}

\begin{remark} In case that the time series is demeaned or detrended first, again the Brownian motion
  in the representation of $ \calU_2 $ has to be replaced by the tied-down Brownian motion $ B^{\mu} $
  or the process $ B^t $.
\end{remark}


Again, we have the following corollary for the related stopping time.

\begin{corollary} Under the conditions of Theorem~\ref{RWUnitRoot1} we have
$$
  N^{-1} \widetilde{R}_N \stackrel{d}{\to}
    \min \{ \kappa \le s \le 1 : \widetilde{\calU}_2(s) > c \},  \qquad N \to \infty .
$$
\end{corollary}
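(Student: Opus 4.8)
The plan is to reuse, with the obvious sign change, the argument already given for the preceding corollary on $N^{-1}R_N$, the only structural difference being that $\widetilde{R}_N$ is triggered by \emph{large} values of $\widetilde{U}_N$ rather than by small values of $U_N$.

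\textbf{Step 1 (rewrite the stopping event).} Since $\widetilde{R}_N = \min\{k \le n \le N : \widetilde{U}_N(n/N) > c\}$ with $k = \trunc{\kappa N}$, for $x \in [\kappa,1]$ the event $\{N^{-1}\widetilde{R}_N > x\}$ is, modulo the usual $\trunc{\cdot}$/start-of-monitoring discretisation (which is immaterial in the limit), the event that $\widetilde{U}_N(n/N) \le c$ for every integer $n$ with $k \le n \le \trunc{Nx}$, i.e. the event $W_N(x) := \sup_{s\in[\kappa,x]}\widetilde{U}_N(s) \le c$, the supremum being a maximum since $\widetilde{U}_N$ is a step function. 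Hence $P(N^{-1}\widetilde{R}_N \le x) = 1 - P(W_N(x) \le c)$, and it suffices to study the running maximum $W_N(x)$.

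\textbf{Step 2 (pass to the limit).} By Theorem~\ref{RWUnitRoot1}(ii), $\widetilde{U}_N \Rightarrow \widetilde{\calU}_2$ in $D[\kappa,1]$. For each fixed $x$ the map $f \mapsto \sup_{s\in[\kappa,x]}f(s)$ from $D[\kappa,1]$ to $\R$ is continuous at every $f \in C[\kappa,1]$, and $\widetilde{\calU}_2$ has continuous sample paths w.p.\ $1$ — indeed $\widetilde{\calU}_2 = (EY_1^2/\sigma^2)\,\calU_2$ is a deterministic positive multiple of the process $\calU_2$ of part (i). The CMT therefore gives $W_N(x) \Rightarrow W(x) := \sup_{s\in[\kappa,x]}\widetilde{\calU}_2(s)$, so that at every continuity point $c$ of the distribution function of $W(x)$,
\[
  \lim_{N\to\infty} P(N^{-1}\widetilde{R}_N \le x) = 1 - P(W(x) \le c) = P\Big(\sup_{s\in[\kappa,x]}\widetilde{\calU}_2(s) > c\Big) = P\big(\min\{\kappa \le s \le 1 : \widetilde{\calU}_2(s) > c\} \le x\big),
\]
the last equality using path-continuity of $\widetilde{\calU}_2$ (so that the supremum is attained and a strict overshoot is equivalent to the minimum being $\le x$). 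This is exactly the claimed convergence in distribution, provided every $c$ of interest is a continuity point.

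\textbf{Step 3 (atom-free limit).} As in the preceding corollary I would move to the separable Banach space $(C[\kappa,1],\|\cdot\|_\infty)$ — legitimate since $\widetilde{\calU}_2 \in C[\kappa,1]$ w.p.\ $1$ — and invoke Lifshits (1982, Theorem~2): the law of the supremum $W(x)$ can carry an atom only at $\gamma_x = \sup\{E\widetilde{\calU}_2(t) : \kappa \le t \le x,\ \operatorname{Var}\widetilde{\calU}_2(t) = 0\}$. Here $\widetilde{\calU}_2(t)$ is a genuinely non-degenerate nonnegative random variable for every $t \in [\kappa,1]$ (a strictly positive integral of the non-constant process $B(r)^2$ against the non-trivial weight $K(\zeta(r-t))\ge 0$), so $\operatorname{Var}\widetilde{\calU}_2(t) > 0$ throughout, the indexing set is empty, and there is no atom at all. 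Consequently every $c > 0$ is a continuity point of the distribution function of $W(x)$, and the limit in Step~2 holds for all such $c$, completing the proof.

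\textbf{Expected main obstacle.} The substantive analytic input is already delivered by Theorem~\ref{RWUnitRoot1}(ii), so I do not anticipate any serious difficulty; the only delicate points are bookkeeping — getting the direction and strictness of the crossing event in Step~1 exactly right (open versus closed, together with the $\trunc{Nx}$ and $k=\trunc{\kappa N}$ discretisation), and checking in Step~3 that Lifshits's exceptional point $\gamma_x$ is vacuous here so that no genuine atom can obstruct the convergence. Everything else transcribes verbatim from the argument for $N^{-1}R_N$.
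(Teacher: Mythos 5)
Your proposal is correct and follows essentially the same route the paper takes: the paper states this corollary without proof, clearly intending the argument it gives for the preceding corollary on $N^{-1}R_N$ (rewrite the stopping event as a running supremum, apply the continuous mapping theorem using the path-continuity of the limit, and invoke Lifshits (1982, Theorem~2) to rule out atoms), and your Steps 1--3 reproduce exactly that argument with the inequality reversed. The only cosmetic difference is that you argue the Lifshits exceptional set is empty on $[\kappa,x]$ rather than locating the possible atom at $0$, which changes nothing.
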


So far we considered mean-zero time series.
The following theorem provides sufficient conditions for a well-defined limit
for a $I(0)$ series with a (local) nonparametric trend.

\begin{theorem}
\label{RWUnitRoot2}
Suppose $ \{ Y_n \} $ satisfies $ Y_n = m_n + u_n $,$ n \in \N $, $h > 0 $, where
$ \{ u_n \} $ satisfies the conditions of Theorem~\ref{RWUnitRoot1}~(i) with the $ Y_n $
replaced by $ u_n$'s,
and $ \{ m_{N,n} \} $ is an array of non-negative constants with
$ m_{N,n} \le M $ for all $N,n$, such that
\begin{itemize}
  \item[(i)] $ N^{-1/2} \sum_{i=1}^{ \trunc{Ns} } u_i \Rightarrow \sigma B(s) $, as $ N \to \infty $,
    for some $ 0 < \sigma < \infty $,
  \item[(ii)] $ \sup_{0 \le s \le 1} | N^{-1/2} \sum_{i=1}^{ \trunc{Ns} } m_{N,i} - \mu(s) | \to 0 $, as $N \to \infty$.
\end{itemize}
for some deterministic drift $ \mu(s) \in D[0,1] $, which is continuous at $0$. Then
$$
  \trunc{Ns} U_N(s) \Rightarrow \calU_2^\mu(s)
    = \frac{1}{s E(Y_1^2) } \zeta \int_0^s [\mu(r)+\sigma B(r)]^2 K( \zeta(r-s) ) \, dr,
    \qquad \text{in $D[\kappa,1]$},
$$
as $ N \to \infty $. If $ \mu \in C[0,1] $, then $ \calU_2^\mu $ has continuous sample paths w.p. $1$.
\end{theorem}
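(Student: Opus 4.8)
The plan is to follow the proof of Theorem~\ref{RWUnitRoot1}~(i), keeping track of the deterministic trend $ m_{N,n} $ in both the numerator and the denominator of $ U_N $. Writing $ S_i=\sum_{j=1}^i Y_j=M_i+U_i $ with $ M_i=\sum_{j=1}^i m_{N,j} $ and $ U_i=\sum_{j=1}^i u_j $, and recalling from that proof that $ \trunc{Ns}\,U_N(s)=V_{N2}(s)/V_{N1}(s) $ with $ V_{N1}(s)=\trunc{Ns}^{-1}\sum_{i=1}^{\trunc{Ns}}Y_i^2 $ and $ V_{N2}(s)=\trunc{Ns}^{-1}\sum_{i=1}^{\trunc{Ns}}S_i^2 K_h(i-\trunc{Ns}) $, I would analyse the two processes separately and then combine them through the continuous mapping theorem, exactly as in Theorem~\ref{RWUnitRoot1}~(i).

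For the denominator I would expand $ Y_i^2=u_i^2+2m_{N,i}u_i+m_{N,i}^2 $. By the weak law of large numbers for weakly stationary sequences used in the proof of Theorem~\ref{RWUnitRoot1}~(i), $ \trunc{Ns}^{-1}\sum_{i=1}^{\trunc{Ns}}u_i^2\to E(u_1^2)>0 $ in probability for each fixed $ s $, the limit being constant in $ s $ (here $ E(Y_1^2) $ in the statement is to be read as $ E(u_1^2) $). Non-negativity together with $ m_{N,i}\le M $ gives $ 0\le\trunc{Ns}^{-1}\sum_{i=1}^{\trunc{Ns}}m_{N,i}^2\le M\trunc{Ns}^{-1}\sum_{i=1}^{\trunc{Ns}}m_{N,i}=M\,\frac{N^{1/2}}{\trunc{Ns}}\bigl(N^{-1/2}\sum_{i=1}^{\trunc{Ns}}m_{N,i}\bigr)=O(N^{-1/2}) $ uniformly on $ [\kappa,1] $ by hypothesis~(ii), and the cross term has mean zero and $ L_2 $-norm at most $ 2M\bigl(\trunc{Ns}^{-2}\sum_{i,j=1}^{\trunc{Ns}}|\Cov(u_i,u_j)|\bigr)^{1/2}=O(\trunc{Ns}^{-1/2}) $, the autocovariances of the $ I(0) $ series $ \{u_n\} $ being summable (long memory is excluded). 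Hence $ V_{N1}(s)\to E(u_1^2)>0 $ in probability, the limit still constant in $ s $.

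For the numerator I would rewrite, as in the proof of Theorem~\ref{RWUnitRoot1}~(i) and up to asymptotically negligible boundary contributions,
$$
  V_{N2}(s)=\frac{N}{\trunc{Ns}}\,(N/h)\int_0^s\Bigl(N^{-1/2}\sum_{j=1}^{\trunc{Nr}}Y_j\Bigr)^2 K\bigl(\trunc{Nr}/h-\trunc{Ns}/h\bigr)\,dr.
$$
The one genuinely new ingredient is the weak limit of the rescaled partial sums: by~(i), $ N^{-1/2}\sum_{j=1}^{\trunc{Nr}}u_j\Rightarrow\sigma B(r) $ in $ D[0,1] $, while by~(ii) the deterministic functions $ N^{-1/2}\sum_{j=1}^{\trunc{Nr}}m_{N,j} $ converge uniformly to $ \mu\in D[0,1] $; adding a uniformly convergent deterministic perturbation to a weakly convergent sequence gives $ N^{-1/2}\sum_{j=1}^{\trunc{Nr}}Y_j\Rightarrow\mu(r)+\sigma B(r) $ in $ D[0,1] $. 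From here the argument of Theorem~\ref{RWUnitRoot1}~(i) carries over line by line with $ \sigma B(r) $ replaced by $ \mu(r)+\sigma B(r) $: since $ N/h\to\zeta $, $ N/\trunc{Ns}\to s^{-1} $ uniformly on $ [\kappa,1] $ (this uses $ \kappa>0 $), and $ K $ is Lipschitz, one has $ \trunc{Nr}/h-\trunc{Ns}/h\to\zeta(r-s) $ and hence $ K(\trunc{Nr}/h-\trunc{Ns}/h)\to K(\zeta(r-s)) $ uniformly; passing from weak to almost sure convergence by the Skorohod/Dudley/Wichura theorem, the integral functional is continuous, yielding $ V_{N2}(s)\Rightarrow V_2^\mu(s):=\zeta\,s^{-1}\int_0^s[\mu(r)+\sigma B(r)]^2 K(\zeta(r-s))\,dr $ in $ D[\kappa,1] $.

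Finally, since $ V_{N1} $ converges in probability to the positive constant $ E(u_1^2) $, joint weak convergence $ (V_{N1},V_{N2})\Rightarrow(E(u_1^2),V_2^\mu) $ in $ (D[\kappa,1])^2 $ follows exactly as in Theorem~\ref{RWUnitRoot1}~(i) via Billingsley (1968, Theorem~4.4), and the continuous mapping theorem applied to the ratio---continuous at the limit because the denominator is a strictly positive constant---gives $ \trunc{Ns}\,U_N(s)\Rightarrow V_2^\mu(s)/E(u_1^2)=\calU_2^\mu(s) $ in $ D[\kappa,1] $. If $ \mu\in C[0,1] $, then $ r\mapsto[\mu(r)+\sigma B(r)]^2 $ has continuous sample paths w.p.\ $1$, and since $ K $ is Lipschitz (so that numerator and denominator are continuous functions of $ s $), $ \calU_2^\mu $ has continuous sample paths w.p.\ $1$, just as in Theorems~\ref{RWUnitRoot0}--\ref{RWUnitRoot1}. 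I expect the main obstacle to be the denominator analysis: one must verify that the trend does not perturb the limit $ E(u_1^2) $, and this hinges essentially on the ``locality'' built into~(ii), which forces both $ \trunc{Ns}^{-1}\sum m_{N,i}^2=O(N^{-1/2}) $ and the cross-term bound; the numerator is comparatively routine once the shifted Gaussian limit $ \mu+\sigma B $ has been identified, since the rest is quoted from Theorem~\ref{RWUnitRoot1}~(i).
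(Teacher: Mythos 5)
Your proposal is correct and follows essentially the same route as the paper: establish $N^{-1/2}\sum_{i\le\trunc{Ns}}Y_i\Rightarrow\mu(s)+\sigma B(s)$ from (i) and (ii), show the denominator still tends to the constant $E(u_1^2)$ because the trend contributions are negligible, and then rerun the argument of Theorem~\ref{RWUnitRoot1}~(i) with $\sigma B$ replaced by $\mu+\sigma B$ before applying the continuous mapping theorem. The only differences are cosmetic: you bound the cross term $2\trunc{Ns}^{-1}\sum m_{N,i}u_i$ explicitly in $L_2$ where the paper leaves this implicit, and you argue the drifted FCLT elementarily where the paper cites Jacod and Shiryaev (2003, VI, Proposition~1.22).
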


\begin{proof} Using Jacod and Shiryaev (2003, VI, Proposition~1.22, p.~329) conditions (i) and (ii) yield
$
  N^{-1/2} \sum_{i=1}^{ \trunc{Ns} } Y_i \Rightarrow \mu(s) + \sigma B(s),
$
in $ D[0,1] $, as $ N \to \infty $.
The proof follows by a simple modification of the proof of the previous theorem by
noting that for the denominator we have
\begin{eqnarray*}
  \trunc{Ns}^{-1} \sum_{i=1}^{ \trunc{Ns} } Y_i^2
   & = &
        \trunc{Ns}^{-1} \sum_{i=1}^{\trunc{Ns}} u_i^2
        + 2 \trunc{Ns}^{-1} \sum_{i=1}^{\trunc{Ns}} m_{N,i} u_i
        + \trunc{Ns}^{-1} \sum_{i=1}^{\trunc{Ns}} m_{N,i}^2 \\
   & \stackrel{P}{\to} & EY_1^2,
\end{eqnarray*}
as $ N \to \infty $, because $ m_{N,i} \le M $ for all $i$ and $N$
for some constant $M >0$
implies $ N^{-1} \sum_i m_{N,i}^2 \le M/N^{1/2} \sum_i (m_{N,i}/N^{1/2}) = o(1) $.
\end{proof}

We illustrate the conditions (i) and (ii) by a local change-point model,
where starting at a change-point $ \trunc{N \vartheta} $ the mean is no longer $0$
but positive and induced by a non-negative function $ m_0 : \R \to \R^+ $.
Particularly, (truncated) linear trends as
$ m_0(x) = a x $ if $ x \in [0,1] $ and $ m_0(x) = 0 $ otherwise for
some $ a > 0 $ are allowed.

\begin{example} Assume $ Y_{N,n} = m_{N,n} + u_n $ with
$$
  m_{N,n} = m_0( (n - \trunc{N\vartheta})/N ) N^{-1/2}.
$$
Here we assume that the function $ m_0 : \R \to \R $ satisfies $ m(s) = 0 $ for $ s<0$, is right-continuous, non-negative with bounded variation, and has at most finitely many jumps. Further, $ m_0 $ is assumed to be Lipschitz continuous and bounded between the jumps, and is integrable, i.e., $ \int_0^\infty m_0(t) \, dt < \infty $. Finally, we assume that
there is some $ t^* > 0 $ with $ m_0(t) > 0 $ for $ t \in (0,t^*) $.
It is easy to see that the conditions (i) and (ii) of Theorem~\ref{RWUnitRoot2} are
satisfied. The limiting mean function is given by
$$
  \mu(s) = \int_0^s m_0( r - \vartheta ) \, dr,
$$
and one obtains
$$
  N^{-1/2} \sum_{i=1}^{ \trunc{Ns} } Y_{N,i} \Rightarrow \int_0^s m_0(r-\vartheta) \, dr
   + \sigma B(s),
$$
as $ N \to \infty $.
\end{example}

\begin{remark}
By making use of the Karhunen-Lo\`eve representation
$$
  B(t) \stackrel{d}{=}  \sqrt{2} \sum_{n=0}^\infty \frac{ \sin( (n-1/2) \pi t ) }
                    { (n-1/2) \pi  } Z_n, \ t \in [0,1],
$$
where $ \{ Z_n \} $ are i.i.d. $ N(0,1) $-random variables, cf.
Ito and Nisio (1968), we also represented the limiting distributions
as simple rational functions of infinite quadratic forms of the type
$$ \sum_{n,m=0}^\infty \gamma_{mn}(s) Z_n Z_m. $$
Following a referee, we omit these results here,
since we did not use them for our simulations.
\end{remark}

\subsection{Asymptotics for local-to-unity processes}

Let us now consider the asymptotic behaviour of $ U_N $ under a model which
is nearly $ I(1) $. More precisely, we consider a sequence of models which converges to an
$ I(1) $ model yielding what is called {\em local-to-unity asymptotics}.
The {\em local-to-unity model} assumes that we are given an array
$ \{ Y_{N,n} : N \in \N, n \in \N \} $ satisfying
\begin{equation}
\label{RWLocalToUnityModel}
  Y_{N,n+1} = (1 + a/N) Y_{N,n} + u_n, \qquad 1 \le n \le N, \ N \in \N,
\end{equation}
where $ a \in \R $ and $ \{ u_n \} $ is a $ I(0) $ process.
Thus, $ \{ Y_{N,n} \} $ converges to a random walk,  as $ N \to \infty $.
Note that both positive and negative values for $ a $ are allowed.
Under the local-to-unity model an Ornstein-Uhlenbeck process
appears in the limit process instead of the Brownian motion.
It has been proposed in the literature to use estimates for $a$ and to use
the corresponding asymptotic distributions under the local-to-unity asymptotics
as approximations. The analyses of Stock and Watson (1998, Table~7)
for the US annual series of the GDP, consumption, and investment imply estimates for
$a$ in the region between $-15$ and $-3$.

The following theorem contains Breitung (2002, Proposition~4, p. 350) as a special case.

\begin{theorem}
\label{RWUnitRootLTU}
Assume the local-to-unity model (\ref{RWLocalToUnityModel}) holds. Then
$$
  U_N(s) \Rightarrow \calU_\calZ(s) = \frac{ \zeta s^{-1} \int_0^s \left[ \int_0^r \calZ(t;a) \, dt
  \right]^2 K(\zeta(s-r)) \, dr, }
                          { \int_0^s \calZ(r;a)^2 \, dr },
$$
in $ D[\kappa,1] $, as $ N \to \infty $,
where
$$
  \calZ(s;a) = \int_0^s e^{a(s-r)} \, d B(r),\qquad s \in [0,1],
$$
is an Ornstein-Uhlenbeck process. Further,
$$
  N^{-1} R_N \stackrel{d}{\to} \min \{ \kappa \le s \le 1 : \calU_\calZ(s) < c \},
  \qquad N \to \infty.
$$
\end{theorem}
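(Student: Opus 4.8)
The plan is to run the argument of Theorem~\ref{RWUnitRoot0} almost verbatim, the only genuine new ingredient being the functional limit theorem for the near-integrated recursion (\ref{RWLocalToUnityModel}): once $N^{-1/2}Y_{N,\trunc{N\cdot}}$ is shown to converge weakly to a (scaled) Ornstein--Uhlenbeck process, the representation $U_N = X_{2N}/X_{1N}$, the joint convergence of the pair, and the continuous mapping theorem carry over with $\calZ(\cdot;a)$ in place of the Brownian motion $B$. The statement on $N^{-1}R_N$ is then deduced exactly as in the corollary to Theorem~\ref{RWUnitRoot0}.

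\textbf{Step 1: the Ornstein--Uhlenbeck FCLT.} Solving (\ref{RWLocalToUnityModel}) gives $Y_{N,n} = (1+a/N)^{n-1}Y_{N,1} + \sum_{j=1}^{n-1}(1+a/N)^{n-1-j}u_j$; assuming a negligible initial condition $Y_{N,1}=o_P(N^{1/2})$ and writing $S_N(r)=N^{-1/2}\sum_{j=1}^{\trunc{Nr}}u_j$, which converges weakly to $\sigma B$ by the $I(0)$ assumption (\ref{RWDefI0}), one has $(1+a/N)^{\trunc{Ns}-\trunc{Nr}}=\exp\{(\trunc{Ns}-\trunc{Nr})\log(1+a/N)\}\to e^{a(s-r)}$ uniformly in $0\le r\le s\le 1$. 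An Abel summation together with the continuous mapping theorem (the standard near-integrated argument, cf.\ Chan and Wei (1987)) then yields $N^{-1/2}Y_{N,\trunc{Ns}}\Rightarrow \sigma\,\calZ(s;a)$ in $D[0,1]$, where $\calZ(s;a)=\int_0^s e^{a(s-r)}\,dB(r)$ has continuous sample paths w.p.~$1$.

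\textbf{Step 2: the ratio and the stopping time.} Set $X_{1N}(s)=\trunc{Ns}^{-2}\sum_{i=1}^{\trunc{Ns}}Y_{N,i}^2$ and $X_{2N}(s)=\trunc{Ns}^{-3}\sum_{i=1}^{\trunc{Ns}}(\sum_{j=1}^i Y_{N,j})^2 K_h(i-\trunc{Ns})$. Feeding Step~1 into the proof of Theorem~\ref{RWUnitRoot0} gives $X_{1N}(s)\Rightarrow \sigma^2 s^{-2}\int_0^s\calZ(r;a)^2\,dr$, and, since $K$ is Lipschitz continuous and $N/h\to\zeta$, the Skorohod/Dudley/Wichura representation lets one write $X_{2N}$ as an asymptotically continuous functional of $N^{-1/2}Y_{N,\trunc{N\cdot}}$, so that $X_{2N}(s)\Rightarrow \zeta s^{-3}\sigma^2\int_0^s(\int_0^r\calZ(t;a)\,dt)^2 K(\zeta(s-r))\,dr$; the finite-dimensional distributions of $\lambda_1 X_{1N}+\lambda_2 X_{2N}$ converge and both coordinate sequences are tight, so $(X_{1N},X_{2N})$ converges jointly in $(D[\kappa,1])^2$. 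Because $\int_0^s\calZ(r;a)^2\,dr>0$ a.s.\ for $s\ge\kappa>0$, the CMT applies to the quotient and produces $U_N\Rightarrow\calU_\calZ$ in $D[\kappa,1]$, the factor $\sigma^2$ cancelling; Lipschitz continuity of $K$ and continuity of integration give $\calU_\calZ\in C[\kappa,1]$ w.p.~$1$. For the stopping time one copies the corollary to Theorem~\ref{RWUnitRoot0}: $N^{-1}R_N>x\iff\sup_{s\in[\kappa,x]}U_N(s)\ge c$, the CMT gives $\sup_{s\in[\kappa,x]}U_N(s)\Rightarrow\sup_{s\in[\kappa,x]}\calU_\calZ(s)$, and working in $(C[0,1],\|\cdot\|_\infty)$ one invokes Lifshits (1982, Theorem~2) to conclude that the law of the supremum can have an atom only at a point that here equals $0$, so every $c>0$ is a continuity point and $N^{-1}R_N\stackrel{d}{\to}\min\{\kappa\le s\le 1:\calU_\calZ(s)<c\}$.

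\textbf{Main obstacle.} The only nonroutine point is the ``more involved argument'' used for $X_{2N}$ in Step~2: one must control the kernel weights $K_h(i-\trunc{Ns})$ with $h\sim N/\zeta$ via the Lipschitz bound, pass from the weighted sum to the Riemann integral $\zeta\int_0^s(\cdot)\,K(\zeta(r-s))\,dr$ uniformly in $s\in[\kappa,1]$, and combine this with the Skorohod coupling so that the weak convergence of $N^{-1/2}Y_{N,\trunc{N\cdot}}$ from Step~1 transfers to $X_{2N}$. This is exactly the estimate already carried out in the proof of Theorem~\ref{RWUnitRoot0}, now with the Ornstein--Uhlenbeck process $\calZ(\cdot;a)$ in place of $B$, so granting it the remainder of the proof is a verbatim repetition.
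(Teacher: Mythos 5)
Your proposal is correct and follows essentially the same route as the paper: establish $N^{-1/2}Y_{N,\trunc{N\cdot}}\Rightarrow\sigma\calZ(\cdot;a)$ first, then rerun the $X_{1N},X_{2N}$ joint-convergence and CMT argument of Theorem~\ref{RWUnitRoot0} and the Lifshits atom argument for the stopping time. The only difference is one of detail: where you invoke ``Abel summation plus the CMT'' for the near-integrated FCLT, the paper carries this out explicitly by writing $N^{-1/2}Y_{N,\trunc{Ns}}=\int_0^s e_N(r;s)\,dS_N(r)$ with $e_N(r;s)=(1+a/N)^{\trunc{Ns}-\trunc{Nr}}$, splitting off $\int(e_N-e)\,dB$ and $\int e_N\,d(S_N-B)$, and bounding the latter by integration by parts using a uniform bound on $\sup_s\int|de_N(\cdot;s)|$ --- which is precisely the summation-by-parts estimate you gesture at.
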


\begin{remark} The stochastic integral appearing here is
of the type $ I(s) = \int_0^s F(s,r) \, B(dr) $, $ F(s,r) $ non-stochastic, strictly
monotone in $r$ and bounded  with bounded variation.
It is a special case of the Ito integral.
However, since the Stieltjes integral
$ \int_0^s B(r) \, F(s,dr) $ exists, $I(s) $ can also be defined
by the integration by parts formula
$$
  \int_0^s B(r) \, F(s,dr) = F(s,r) B(r) |_{r=0}^{r=s}
  - \int_0^s F(s,r-) \, B(dr),
$$
where $ F(s,r-) = \lim_{x \uparrow r} F(s,r) $. For this approach see
Shorack and Wellner (1986, p.~127) or Gill (1989, p.~110).
\end{remark}

\begin{proof}
Put $ W_n = \sum_{i=1}^n u_i $, $n \in \N $,
and $ S_N(s) = N^{-1/2} \sum_{i=1}^{ \trunc{Ns} } u_i $, $ s \in [0,1].$
By assumption $ S_N(s) \Rightarrow \sigma B(s) $ in $ D[0,1] $, as $ N \to \infty $.
We may assume $ \sigma = 1 $.
Note that $ N^{-1/2} Y_{ N, \trunc{Ns} } $ can be written as a stochastic
Stieltjes integral, namely
\begin{eqnarray*}
  N^{-1/2} Y_{ N, \trunc{Ns} } & = &
    N^{-1/2} \sum_{i=1}^{ \trunc{Ns} } (1 + a/N)^{ \trunc{Ns} - i } u_i \\
  & = &
     \sum_{i=1}^{ \trunc{Ns} } (1 + a/N)^{ \trunc{Ns} - i } N^{-1/2}(W_i - W_{i-1})
    = \int_0^s e_N(r;s) \, d S_N(r),
\end{eqnarray*}
where the integrand,
$ e_N(r;s) = (1+a/N)^{\trunc{Ns} - \trunc{Nr} }, (r,s) \in \calI = \{ (u,v) \in [0,1]^2 :
0 \le u \le v, 0 \le v \le 1 \} $, is a step function in $ r $. The fact that
$ \log[ (1+a/N)^{\trunc{Ns}-\trunc{Nr}}/e^{a(s-r)} ] = a(s-r) + o(1) $
uniformly in $ (r,s) \in \calI $ implies that $ e_N(r;s) $ converges uniformly
in $ (r,s) \in \calI $ to $ e(r;s) = e^{a(s-r)} $. Particularly, there is some
constant $ C$ such that $ | e_N(r;s) | \le C $ for all $ (r,s) \in \calI $.
Further, since for $ a < 0 $ and fixed $ s \in [0,1] $,
the variation $ \int | d e_N(\cdot;s) | $ of
$ e_N(r;s) $ as a function of $ r \in [0,s] $ is given by
\begin{eqnarray*}
  \int | d e_N(\cdot;s)| &=& \sum_{i=0}^{\trunc{Ns}-1} \left\{ e_N[(i+1)/N;s] - e_N( i/N; s ) \right\} \\
  &=& (1+a/N)^{\trunc{Ns} - \trunc{Nr}} - (1+a/N)^{\trunc{Ns}}
   \to e^{a(s-r)} - e^{as}.
\end{eqnarray*}
It follows that $ \sup_{s \in [0,1]} \int | d e_N(\cdot;s) | < \infty $.
To estimate
$$ \sup_{s \in [0,1]} \left| \int_0^s e_N(s;r) \, dS_N(r)
 - \int_0^s e^{a(s-r)} \, d B(r) \right|,
$$ we use the decomposition
$$
  \int_0^s e_N(r;s) \, dS_N(r) - \int_0^s e(r;s) \, d B(r)
    = \int_0^s [e_N(r;s) - e(r;s)] \, dB(r) + \int_0^s e_N(r;s) \, d[S_n(r)-B(r)].
$$
Of course,
$$
  \sup_{s \in [0,1]} \left| \int_0^s e_N(r;s) \, dB(r) - \int_0^s e(r;s) \, d B(r) \right|
  \stackrel{P}{\to} 0,
$$
as $ N \to \infty $, see, e.g., Shorack and Wellner (1986, p.~130).
Integration by parts yields
$$
  \sup_{s \in  [0,1]} \left| \int_0^s e_N(r;s) \, d [S_N(r) - B(r)] \right|
 \le 2 C \| S_N - B \|_\infty  +
 \| S_N - B \|_\infty \sup_{s \in [0,1]} \int | d e_N(\cdot;s) |.
$$
These estimates imply
$$
  N^{-1/2} Y_{ N, \trunc{Ns} }
  = \int_0^s e_N(r;s) \, d S_N(r) \Rightarrow \int_0^s e^{a(s-r)} \, d B(r)
  =  \calZ(s;a),
$$
as $ N \to \infty $. We obtain
\begin{eqnarray*}
  V_{N1}(s) &=& \frac{N^2}{\trunc{Ns}^2}
  N^{-2} \sum_{i=1}^{ \trunc{Ns} } Y_{N,i}^2  \\
  &=& \frac{N^2}{\trunc{Ns}^2} \int_0^s (N^{-1/2} Y_{ N, \trunc{Nr} } )^2 \, dr \\
  &\Rightarrow& \sigma^2 s^{-2} \int_0^s \calZ(r;a)^2 \, dr = V_1(s),
\end{eqnarray*}
in $ D[\kappa,1] $, as $ N \to \infty $, and, using the same arguments as in the proof of Theorem~\ref{RWUnitRoot0}
\begin{eqnarray*}
 V_{N2}(s)  &= & \frac{N^3}{\trunc{Ns}^3}
                 N^{-3} \sum_{i=1}^{ \trunc{Ns} } \left( \sum_{j=1}^i Y_{N,j} \right)^2 K_h(t_i-t_{ \trunc{Ns} }) \\
      && \qquad = \frac{N^3}{\trunc{Ns}^3}
                  N^{-2} \int_0^s \left( \sum_{j=1}^{ \trunc{Nr} } Y_{N,j} \right)^2
                            K_h( t_{ \trunc{Nr} } - t_{ \trunc{Ns} } ) \, dr \\
      && \qquad = \frac{N^3}{\trunc{Ns}^3}
             N h^{-1} \int_0^s \left( \int_0^r N^{-1/2} Y_{ N, \trunc{ Nt } } \, dt \right)^2
              K( \trunc{Nr}/h - \trunc{Ns}/h ) \, dr  \\
      && \qquad \Rightarrow
            \zeta \sigma^2 s^{-3} \int_0^s \left( \int_0^r \calZ(t;a) \, dt \right)^2 K(\zeta(s-r)) \, dr = V_2(s),
\end{eqnarray*}
in $ D[\kappa,1] $,  as $ N \to \infty $.
Noting that $ V_{N1} $ and $ V_{N2} $ are functionals
of $ N^{-1/2} Y_{N,\trunc{Ns}} $ up to negligible terms, we obtain
$ (V_{N1}, V_{N2}) \Rightarrow (V_1,V_2) $, as $ N \to \infty $. Hence the
CMT yields the assertion.
\end{proof}

\section{Change-point models}
\label{SecCP}

The question arises how the sequential processes and stopping times considered above behave
under a change-point model where after a certain fraction of the data the
time series changes. In this section we consider both change-point models, a change
from $ I(0) $ to $ I(1) $ and a change from $ I(1) $ to $ I(0) $. To design a
monitoring procedure (stopping rule) having well-defined properties under the
null hypothesis of no change, the results from the previous section about the
asymptotic distribution of $ U_N(s) $ (under a $ I(1) $ process) and $ \trunc{Ns} U_N(s) $
or $ \widetilde{U}_N(s) $ (under a $ I(0) $ process) apply.
In particular, for the $ I(0) $-to-$I(1)$ change-point
model monitoring can be based on the stopping time $ \widetilde{R}_N $ calculated from
the process $ \widetilde{U}_N(s) $ which has the well-defined limit
$ \widetilde{\calU}_2(s) $ for $ I(0) $ processes, i.e., under the null hypothesis of no change.
To design a stopping rule for a $ I(1) $-to-$ I(0) $ change-point model one would
rely on the stopping time $ R_N $ and its asymptotic distribution, which is a functional
of $ \calU_1(s) $, the well-defined limit of $ U_N(s) $ for $I(1)$ processes.

We will now study the asymptotic laws under the general case of a change, i.e.,
under the alternative hypothesis that a change-point exists.
For both change-point
models the integrated subseries of the time series $ Y_1, \dots, Y_N $ determines
the proper scaling, since in both models $ U_N(s) $ has a well-defined limit,
whereas $ \trunc{Ns} U_N(s) $ is degenerated. In this sense,
the change-point problems are qualitatively different and the situation is not symmetric.
The $ I(1) $-to-$I(0)$ is smoother in the sense that the same process can be
considered to study the behaviour under the no-change hypothesis and the alternative
of a change-point, whereas for the $ I(0)$-to-$I(1)$ problem the scaling has to be changed.

\subsection{A change from $I(0)$ to $ I(1) $}

Let us assume that the time series is a mean-zero fourth-order $ I(0) $ process
satisfying condition (\ref{Cond4}) at the beginning,
but becomes a random walk, i.e., $ I(1) $ process, starting at the time point
$ \trunc{N \vartheta} $, where $ \vartheta \in (0,1) $.
We consider the following change-point model. Let $ \{ u_n : n \ge 0 \} $ be a mean-zero weakly  stationary $ I(0) $ time series such that all moments of order $4$ exists and
are stationary,  and condition (\ref{Cond4}) is satisfied when the $ Y_n $ are replaced by $ u_n $'s. Further assume that
\begin{equation}
\label{I0toI1}
  Y_i = \left\{ \begin{array}{ll} u_i, & i = 1, \dots, \trunc{N \vartheta}-1 \\
                  Y_{i-1} + u_i, & i = \trunc{N\vartheta}, \dots, N.
                \end{array}
        \right.
\end{equation}
Then, $ Y_1, \dots, Y_{ \trunc{N \vartheta}-1} \sim I(0) $ and
$ Y_{ \trunc{N \vartheta} }, \dots, Y_N \sim I(1) $, i.e., at the change-point
$ \trunc{N\vartheta} $ the time series changes from stationarity to a $ I(1) $ series.

\begin{lemma}
\label{I0toI1Lemma} Under the change-point model (\ref{I0toI1}) we have
$$
  \int_0^s N^{-1/2} Y_{ \trunc{Nt} } \, dt
    \Rightarrow \sigma \int_{\min(s,\vartheta)}^s B(t) \, dt,
  \quad N \to \infty.
$$
in $ D[0,1] $.
\end{lemma}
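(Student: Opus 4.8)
The plan is to split the integral at the change-point $\vartheta$, to argue that the pre-change stretch contributes an asymptotically negligible amount uniformly in $s$, and to identify the limit of the post-change stretch via the FCLT for $\{u_n\}$ combined with the continuous mapping theorem (CMT) applied to the integration functional $f\mapsto\big(s\mapsto\int_\vartheta^s f(t)\,dt\big)$, which is continuous from $D[0,1]$ into $C[0,1]$.

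Write $S_N(r)=N^{-1/2}\sum_{i=1}^{\trunc{Nr}}u_i$, so that $S_N\Rightarrow\sigma B$ in $D[0,1]$ by hypothesis, and in particular $\{S_N\}$ is tight and $\sup_{r}|S_N(r)|=O_P(1)$. I would decompose $\int_0^s N^{-1/2}Y_{\trunc{Nt}}\,dt=A_N(s)+C_N(s)$, where $A_N(s)$ integrates over $[0,\min(s,\vartheta)]$ and $C_N(s)$ over $[\min(s,\vartheta),s]$. On the first region (\ref{I0toI1}) gives $Y_{\trunc{Nt}}=u_{\trunc{Nt}}$, up to a boundary layer $\{t:\trunc{Nt}=\trunc{N\vartheta}\}$ of width below $1/N$; evaluating the step-function integral then shows $A_N(s)=N^{-1}S_N(\min(s,\vartheta))+O_P(N^{-3/2})$, whence $\sup_s|A_N(s)|=O_P(N^{-1})\stackrel{P}{\to}0$. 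This is consistent with the claimed limit, which equals $\int_s^s B=0$ whenever $s\le\vartheta$.

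For the post-change stretch, iterating the recursion in (\ref{I0toI1}) from $Y_{\trunc{N\vartheta}-1}=u_{\trunc{N\vartheta}-1}$ gives $Y_i=u_{\trunc{N\vartheta}-1}+\sum_{j=\trunc{N\vartheta}}^{i}u_j$ for $i\ge\trunc{N\vartheta}-1$, so that $N^{-1/2}Y_{\trunc{Nt}}=S_N(t)-S_N\big((\trunc{N\vartheta}-1)/N\big)+O_P(N^{-1/2})$ uniformly in $t\ge\vartheta$, the error being a single stationary term controlled by $E u_1^2<\infty$. Since $S_N$ at the arguments $(\trunc{N\vartheta}-1)/N$ and $\vartheta$ differs by a bounded number of $O_P(N^{-1/2})$ terms, one gets $S_N\big((\trunc{N\vartheta}-1)/N\big)=S_N(\vartheta)+o_P(1)$, and together with $S_N\Rightarrow\sigma B$ this yields weak convergence in $D[\vartheta,1]$ of $t\mapsto N^{-1/2}Y_{\trunc{Nt}}$ to $\sigma\overline B$, with $\overline B(t)=B(t)-B(\vartheta)$ a Brownian motion on $[\vartheta,1]$ vanishing at $\vartheta$; relabelling $\overline B$ as $B$ (no conflict arises because the pre-change part has dropped out) recovers the Brownian motion of the statement. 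Applying the CMT to the integration functional gives $C_N(s)\Rightarrow\sigma\int_\vartheta^{s}B(t)\,dt$ in $D[\vartheta,1]$, and since this limit is continuous and $0$ at $\vartheta$ (as is $C_N$ on $[0,\vartheta]$ by construction), the convergence lifts to $D[0,1]$ with limit $\sigma\int_{\min(s,\vartheta)}^{s}B(t)\,dt$. Finally, $A_N\stackrel{P}{\to}0$ uniformly, so a Slutsky-type argument glues the two pieces together to the asserted limit in $D[0,1]$, which plainly has continuous paths.

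I expect the main obstacle to be not any individual estimate but the promotion of everything to \emph{uniform}, i.e.\ $D$-space, statements — in particular, controlling the $O(1/N)$-wide boundary layer around $\vartheta$ where the identification of $Y_{\trunc{Nt}}$ switches regime, and then legitimately concatenating the degenerate pre-change limit with the post-change limit into a single statement in $D[0,1]$; this concatenation is routine only because the pre-change limit is the deterministic zero function and the post-change limit vanishes at the junction point $\vartheta$.
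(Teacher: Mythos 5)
Your decomposition is, in substance, the paper's own: split the integral at the change-point, show the pre-change stretch is uniformly $O_P(N^{-1})$, and identify the post-change stretch as the integrated increment of the partial-sum process via the FCLT and the CMT applied to the integration functional. The paper packages this differently in places --- it builds a piecewise auxiliary process $W_N$ tracking $S_N$, invokes the Skorohod/Dudley/Wichura representation so as to work with almost-sure sup-norm convergence, and disposes of the boundary term $N^{-1/2}Y_{\trunc{N\vartheta}}(s-\vartheta)$ by a fourth-moment Borel--Cantelli argument --- but these are interchangeable technologies, and your $O_P$ bookkeeping plus Slutsky is, if anything, tidier. Your explicit remark that the concatenation at $\vartheta$ works only because the post-change limit is continuous and vanishes there is precisely the point the paper uses implicitly.

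The one step to be uneasy about is the ``relabelling.'' You correctly derive that the post-change limit is $\sigma\int_\vartheta^s \overline B(t)\,dt$ with $\overline B(t)=B(t)-B(\vartheta)$, where $B$ is the weak limit of the full partial-sum process $S_N$. But $\{\overline B(t)\}_{t\in[\vartheta,1]}$ and $\{B(t)\}_{t\in[\vartheta,1]}$, for $B$ a standard Brownian motion on $[0,1]$ as fixed by the paper's conventions, are not equal in law: $\overline B(\vartheta)=0$ while $B(\vartheta)\sim N(0,\vartheta)$, and indeed $\operatorname{Var}(\int_\vartheta^s \overline B(t)\,dt)=(s-\vartheta)^3/3$ whereas $\operatorname{Var}(\int_\vartheta^s B(t)\,dt)=(s-\vartheta)^3/3+\vartheta(s-\vartheta)^2$. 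So the relabelling is not distribution-preserving, and what your argument actually proves is convergence to $\sigma\int_{\min(s,\vartheta)}^s[B(t)-B(\vartheta)]\,dt$, not to the displayed formula read literally. You should know that the paper's own proof makes the identical silent identification: it declares the convergence of $\int_\vartheta^s N^{-1/2}\{Y_{\trunc{Nt}}-Y_{\trunc{N\vartheta}}\}\,dt$ to $\int_\vartheta^s\sigma B(t)\,dt$ ``obvious,'' although its coupling delivers $\sigma[B(t)-B(\vartheta)]$ as the uniform limit of the integrand. Your write-up is therefore at least as rigorous as the source, but the last sentence of your step 3 should either be justified (it cannot be) or replaced by stating the limit in the form $\sigma\int_{\min(s,\vartheta)}^s[B(t)-B(\vartheta)]\,dt$ and carrying that expression forward.
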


\begin{proof}
Define
$$
  W_N(s) = \left\{
    \begin{array}{ll}
      \int_0^s \sqrt{N} Y_{ \trunc{Nt} } \, dt, & 0 \le s < \vartheta, \\
        \int_0^\vartheta \sqrt{N} Y_{\trunc{Nt}} \, dt
        + N^{-1/2} ( Y_{\trunc{Ns}} - Y_{\trunc{N\vartheta}} ),
          & \vartheta \le s \le 1.
    \end{array} \right.
$$
Then, $ W_N \in D[0,1] $ for each $ N \in \N $. By construction of $ W_N $, we have
$ W_N \Rightarrow \sigma B $, $N \to \infty $, in $ D[0,1] $, if (\ref{I0toI1}) holds.
By the Dudley/Skorohod/Wichura representation theorem in general metric spaces,
there exists a probability space with equivalent versions
$ \widetilde{W}_N $ and $ \widetilde{B} $ such that
$ d( \widetilde{W}_N, \widetilde{B} ) \to 0 $ a.s, for $ N \to \infty $. Since
$ \widetilde{B} \in C[0,1] $, we even have
$ \| \widetilde{W}_N - \widetilde{B} \|_\infty \to 0 $ a.s., $ N \to \infty $. Thus, we may assume
$$
  \sup_{0 \le s \le \vartheta} \biggl| N^{1/2} \int_0^s Y_{\trunc{Nt}} \, dt - \sigma B(s) \biggr|
    \stackrel{a.s.}{\to} 0, \ N \to \infty ,
$$
and
$$
  \sup_{\vartheta \le s < 1} | N^{-1/2} ( Y_{\trunc{Ns}} - Y_{\trunc{N\vartheta}} )
    - \sigma [B(s) - B(\vartheta)] | \stackrel{a.s.}{\to} 0, \ N \to \infty.
$$
Note that for $ \vartheta \le s \le 1 $ we have
\begin{eqnarray*}
  \int_0^s N^{-1/2} Y_{\trunc{Nt}} \, dt &=&
    N^{-1} \int_0^\vartheta N^{1/2} Y_{\trunc{Nt}} \, dt
    + \int_\vartheta^s N^{-1/2}( Y_{\trunc{Nt}} - Y_{\trunc{N\vartheta}} ) \, dt \\
   &&\quad + N^{-1/2} Y_{\trunc{N\vartheta}} (s-\vartheta),
\end{eqnarray*}
which should be close to
$
  \sigma B(s)/N + \sigma \int_\vartheta^s B(t) \, dt,
$
whereas for $ 0  \le s < \vartheta $ the second and third term vanish.
Indeed, if we define the $ D[0,1] $-valued process
$$
  A_N(s) = \left\{
    \begin{array}{ll}
      \sigma B(s) / N, & 0 \le s < \vartheta, \\
      \sigma B(\vartheta)/N + \sigma \int_\vartheta^s B(t) \, dt,
      & \vartheta \le s \le 1,
    \end{array} \right.
$$
and observe that $ A_N(s) \Rightarrow A(s;\vartheta) $, as $ N \to \infty $, where
$$
  A(s;\vartheta) = \sigma \int_{\min(s,\vartheta)}^s B(t) \, dt, \qquad 0 \le s \le 1,
$$
we obtain the estimate
\begin{eqnarray*}
  \sup_{s \in [0,1]} \biggl| \int_0^s N^{-1/2} Y_{\trunc{Nt}} \, dt - A_N(s) \biggr|
  &\le& N^{-1} \sup_{s\in [0,\vartheta)} \biggl| \int_0^s N^{1/2} Y_{\trunc{Nt}} \, dt
           - \sigma B(s) \biggr| \\
  && + N^{-1} \sup_{s \in [\vartheta,1]} \biggl| \int_0^\vartheta N^{1/2} Y_{\trunc{Nt}} \, dt - \sigma B(\vartheta) \biggr| \\
  && + \sup_{s \in [\vartheta,1]} \biggl| \int_\vartheta^s N^{-1/2}
    \{ Y_{\trunc{Nt}} - Y_{\trunc{N\vartheta}} \} \, dt - \int_\vartheta^s \sigma B(t) \,dt
     \biggr| \\
  && + \sup_{s \in [\vartheta,1]} \biggl| N^{-1/2} Y_{\trunc{N\vartheta}} (s-\vartheta)
    \biggr|
  \stackrel{a.s.}{\to} 0,
\end{eqnarray*}
as $ N \to \infty $. Whereas the first three terms are obvious, let us consider the last one.
According to (\ref{I0toI1}) we have
$$
  m_4 = E | Y_{\trunc{N\vartheta}} |^4 = E | u_{\trunc{N\vartheta}-1} + u_{\trunc{N\vartheta}} |^4
  < \infty,
$$
since $ E | u_1 |^4 < \infty $ by assumption. Therefore,
\begin{eqnarray*}
  E \left( \sup_{s \in [\vartheta,1]} \left| N^{-1/2} Y_{\trunc{N\vartheta}} (s-\vartheta) \right| \right)^4
  & = & E [ N^{-1/2} | Y_{\trunc{N\vartheta}} | (1-\vartheta) ]^4 \\
  & \le & \frac{m_4 (1-\vartheta)^4 }{ N^2 }.
\end{eqnarray*}
By Markov's inequality we can conclude that for any $ \varepsilon > 0 $
$$
  \sum_{N=1}^\infty P\left( \sup_{s \in [\vartheta,1]} \left| N^{-1/2} Y_{\trunc{N\vartheta}} (s-\vartheta) \right| > \varepsilon \right)
  \le \sum_{N=1}^\infty \frac{m_4 (1-\vartheta)^4}{\varepsilon^4 N^2} < \infty.
$$
Now Serfling (1980, Theorem~1.3.4) yields
$$
  \sup_{s \in [\vartheta,1]} | N^{-1/2} Y_{\trunc{N\vartheta}} (s-\vartheta) |
  \stackrel{a.s.}{\to} 0,
$$
as $ N \to \infty $. The convergence
$$
  \sup_{s \in [0,1]} \biggl| \int_0^s N^{-1/2} Y_{\trunc{Nt}} \, dt - A_N(s) \biggr|
  \stackrel{a.s.}{\to} 0,
$$
as $ N \to \infty $, implies convergence in the metric $d$, which
in turn implies weak convergence of the original versions,
see e.g. Billingsley (1968, Theorem 4.3) or van der Vaart (1998, Theorem 18.10).
\end{proof}


\begin{theorem}
\label{CPI0toI1}
Assume the $ I(0) $-to-$I(1)$ change-point model
 (\ref{I0toI1}) holds. Then for $ U_N $ as defined in
 (\ref{DefUN}) we have
 $$
   U_N(s) \Rightarrow \calU_{01,\vartheta}(s) =
     \left\{ \begin{array}{ll} 0,   &  s \in [0,\vartheta), \\
     \frac{ s^{-1} \zeta \int_0^1 \eins(r \ge \vartheta) [
              \int_\vartheta^r B(t) \, dt ]^2
             K( \zeta(r-s) ) \, dr }
          { \eins(s \ge \vartheta) \int_\vartheta^s [ B(t) + B(\vartheta)]^2 \, dt },
      & s \in [\vartheta,1],
     \end{array} \right.
 $$
 as $ N \to \infty $, yielding
 $$
   N^{-1} R_N \stackrel{d}{\to} \min \{ \kappa \le s \le 1 : \calU_{01,\vartheta}(s) > c \},
 $$
 as $ N \to \infty $.
\end{theorem}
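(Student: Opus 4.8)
The plan is to follow the proof of Theorem~\ref{RWUnitRoot0} step by step: write $U_N=X_{2N}/X_{1N}$ with
$$
  X_{2N}(s)=\trunc{Ns}^{-3}\sum_{i=1}^{\trunc{Ns}}\left(\sum_{j=1}^i Y_j\right)^2 K_h(t_i-t_{\trunc{Ns}}),\qquad X_{1N}(s)=\trunc{Ns}^{-2}\sum_{j=1}^{\trunc{Ns}}Y_j^2,
$$
establish the joint weak convergence of the pair $(X_{1N},X_{2N})$, and apply the continuous mapping theorem; here Lemma~\ref{I0toI1Lemma} supplies the weak-convergence input for the numerator, in the role played by the plain FCLT for a pure $I(1)$ process in Theorem~\ref{RWUnitRoot0}.

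For the numerator I would pass to the Riemann-integral representation used in the proofs of Theorems~\ref{RWUnitRoot0} and~\ref{RWUnitRootLTU},
$$
  X_{2N}(s)=(N/\trunc{Ns})^3\,(N/h)\int_0^s\left(N^{-3/2}\sum_{j=1}^{\trunc{Nr}}Y_j\right)^2 K\bigl(\trunc{Nr}/h-\trunc{Ns}/h\bigr)\,dr,
$$
and invoke Lemma~\ref{I0toI1Lemma}, which gives $N^{-3/2}\sum_{j=1}^{\trunc{Nr}}Y_j\Rightarrow\sigma\int_{\min(r,\vartheta)}^r B(t)\,dt$ in $D[0,1]$. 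Exactly as in Theorem~\ref{RWUnitRoot0}, since $K$ is Lipschitz and $N/h\to\zeta$, a Skorohod/Dudley/Wichura argument shows that $X_{2N}(s)$ is, up to negligible terms, a continuous functional of the process $r\mapsto N^{-3/2}\sum_{j=1}^{\trunc{Nr}}Y_j$, and therefore $X_{2N}(s)\Rightarrow s^{-3}\sigma^2\zeta\int_0^s\left[\int_{\min(r,\vartheta)}^r B(t)\,dt\right]^2 K(\zeta(r-s))\,dr$. Because $\int_{\min(r,\vartheta)}^r B(t)\,dt=\eins(r\ge\vartheta)\int_\vartheta^r B(t)\,dt$ vanishes for $r<\vartheta$, this limit is $0$ for $s<\vartheta$ and equals $\sigma^2$ times the numerator of $\calU_{01,\vartheta}$ for $s\ge\vartheta$.

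For the denominator I would split at the change-point: for $s\ge\vartheta$,
$$
  X_{1N}(s)=\trunc{Ns}^{-2}\sum_{j=1}^{\trunc{N\vartheta}-1}u_j^2+\trunc{Ns}^{-2}\sum_{j=\trunc{N\vartheta}}^{\trunc{Ns}}Y_j^2 .
$$
By the weak law of large numbers for $\{u_j^2\}$ (weak stationarity together with (\ref{Cond4}), exactly as in Theorem~\ref{RWUnitRoot1}(i)) the first sum is $O_P(N)$, hence negligible after division by $\trunc{Ns}^2$, while the second may again be written as a Riemann integral $(N/\trunc{Ns})^2\int_\vartheta^s(N^{-1/2}Y_{\trunc{Nr}})^2\,dr$, to which the uniform almost sure convergence of the scaled post-change level process $N^{-1/2}Y_{\trunc{Nr}}$ on $[\vartheta,1]$ established inside the proof of Lemma~\ref{I0toI1Lemma} applies, giving convergence to $\sigma^2$ times the denominator of $\calU_{01,\vartheta}$. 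For $s<\vartheta$ the same law of large numbers gives $X_{1N}(s)=O_P(N^{-1})$; indeed on $[\kappa,\vartheta)$ the statistic $U_N(s)$ is literally the $I(0)$ variance-ratio of $\{u_n\}$, so Theorem~\ref{RWUnitRoot1}(i) shows $\sup_{\kappa\le s\le\vartheta}\trunc{Ns}\,U_N(s)=O_P(1)$, whence $\sup_{\kappa\le s<\vartheta}U_N(s)\stackrel{P}{\to}0$, consistent with $\calU_{01,\vartheta}\equiv0$ there. Joint weak convergence of $(X_{1N},X_{2N})$ then follows as in Theorem~\ref{RWUnitRoot0}, via convergence of the finite-dimensional distributions of linear combinations together with coordinate-wise tightness.

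The continuous mapping theorem must now be applied with care at $s=\vartheta$, which I expect to be the main obstacle: there the limiting denominator vanishes, so the ratio map $(x,y)\mapsto y/x$ is discontinuous at the value the limit takes at $\vartheta$, and one cannot apply the CMT to $(X_{1N},X_{2N})$ on a neighbourhood of $\vartheta$. The remedy is to split $[\kappa,1]=[\kappa,\vartheta+\delta]\cup[\vartheta+\delta,1]$: on $[\kappa,\vartheta)$ the $I(0)$ bound just obtained gives $\sup_{[\kappa,\vartheta)}U_N\stackrel{P}{\to}0$; on $[\vartheta,\vartheta+\delta]$ a direct order-of-magnitude estimate shows that $\sup_{\vartheta\le s\le\vartheta+\delta}U_N(s)$ is small, uniformly in large $N$, when $\delta$ is small (there $U_N(s)$ is governed by its small post-change part); and on $[\vartheta+\delta,1]$ the limiting denominator is bounded away from $0$, so the CMT applies directly and gives $U_N\Rightarrow\calU_{01,\vartheta}$ there; letting $\delta\downarrow0$ yields $U_N\Rightarrow\calU_{01,\vartheta}$. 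The stopping-time statement then follows as in the corollary to Theorem~\ref{RWUnitRoot0}: the event $\{N^{-1}R_N\le x\}$ is expressed through a running extremum of $U_N$ over $[\kappa,x]$, the convergence of $U_N$ transfers to this extremum by the CMT (using the same split at $\vartheta$), and a Lifshits (1982)-type argument rules out the relevant critical values being atoms, leaving $N^{-1}R_N\stackrel{d}{\to}\min\{\kappa\le s\le1:\calU_{01,\vartheta}(s)>c\}$.
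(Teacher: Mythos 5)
Your proposal is correct and follows essentially the same route as the paper: the same numerator/denominator decomposition, the same Riemann--integral representations fed by Lemma~\ref{I0toI1Lemma} for the numerator and a split at the change-point for the denominator, then joint weak convergence and the CMT as in Theorem~\ref{RWUnitRoot0}. The only place you go beyond the paper is the explicit handling of a neighbourhood of $s=\vartheta$, where the limiting denominator vanishes and the ratio map is discontinuous; the paper disposes of this with the remark that the denominator is positive w.p.~$1$ and a reference back to the earlier proof, so your $\delta$-splitting argument is a legitimate (and arguably needed) elaboration of the same approach rather than a different method.
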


\begin{proof} An easy application of Lemma~\ref{I0toI1Lemma}
  yields for the numerator of $ U_N $
  \begin{eqnarray*}
    && \trunc{Ns}^{-3} \sum_{i=1}^{ \trunc{Ns} } \biggl( \sum_{j=1}^i Y_j \biggr)^2
       K_h( t_i - t_{\trunc{Ns}} ) \\
    && \qquad = \frac{ N^3 }{ \trunc{Ns}^3 } \frac{N}{h}
     \int_0^s \biggl( \int_0^r N^{-1/2} Y_{\trunc{Nt}} \, dt \biggr)^2
        K [(\trunc{Nr}-\trunc{Ns})/h] \\
    && \qquad \Rightarrow
      s^{-3} \zeta \int_\vartheta^s
        \biggl[ \sigma \int_\vartheta^r B(t) \, dt \biggr]^2
        K( \zeta(r-s) ) \, dr,
  \end{eqnarray*}
  in $ D[\kappa,1] $, as $ N \to \infty $.
  Since for $ s \ge \vartheta $
  $$
    \trunc{Ns}^{-2} \sum_{i=\trunc{N\vartheta}}^{\trunc{Ns}} Y_i^2
    = \frac{N^2}{\trunc{Ns}^2} N^{-1} \int_\vartheta^s Y_{\trunc{Nt}}^2 \, dt
    \Rightarrow s^{-2} \sigma^2 \int_\vartheta^s [B(t) + B(\vartheta)]^2 \, dt,
  $$
  as $ N \to \infty $, and, by stationarity of $ Y_1, \dots, Y_{\trunc{N\vartheta}-1} $,
  $ N^{-1} \sum_{i=1}^{\trunc{Ns}} Y_i^2 \to s \sigma^2 $ if $ s < \vartheta $,
  for the denominator of $ U_N $ we obtain
  $$
    \trunc{Ns}^{-2} \sum_{i=1}^{\trunc{Ns}} Y_i^2
    \Rightarrow \eins(s \ge \vartheta) s^{-2} \sigma^2
      \int_\vartheta^s [B(t) + B(\vartheta)]^2 \, dt,
  $$
  in $ D[\kappa,1] $, as $ N \to \infty $. Note that the denominator is positive w.p. $1$.
  Using the arguments given in detail in the proof of
  Theorem~\ref{RWUnitRoot0} and applying the CMT yields the assertions.
\end{proof}

\subsection{A change from $I(1)$ to $ I(0) $}

Now assume that the first part of the time series is a random walk, i.e. $ I(1) $, and
changes to a $ I(0) $ process at the change-point
$ \trunc{N\vartheta} $ for some fixed constant $ \vartheta \in (0,1) $, i.e.,
\begin{equation}
\label{I1toI0}
  Y_i = \left\{
  \begin{array}{ll} \sum_{j=0}^i u_j, & i = 0, \dots, \trunc{N\vartheta}-1, \\
                    \eta u_i, & i = \trunc{N\vartheta}, \dots, N,
  \end{array} \right.
\end{equation}
Here $ \eta > 0 $ is a scale parameter, which is briefly discussed at the end of this section,
and $ \{ u_n \} $ is a weakly stationary mean zero $I(0)$ time series satisfying
condition (\ref{Cond4}).

Model (\ref{I1toI0}) implies that the
variance function is linear with positive slope before the change and
constant after the change.

\begin{lemma}
\label{I1toI0Lemma} Assume the change-point model (\ref{I1toI0}) holds. Then
we have
$$
  \int_0^s N^{-1/2} Y_{ \trunc{Nt} } \, dt
    \Rightarrow  \sigma \int_0^{\min(s,\vartheta)} B(t) \, dt,
  \quad N \to \infty.
$$
\end{lemma}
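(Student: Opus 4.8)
The plan is to follow the template of the proof of Lemma~\ref{I0toI1Lemma}, splitting the integral at the change-point fraction. For $t<\vartheta$ the observation $Y_{\trunc{Nt}}$ coincides, up to a boundary set of Lebesgue measure less than $1/N$, with the random walk $\sum_{j=0}^{\trunc{Nt}} u_j$, so the scaled partial integral converges to $\sigma\int_0^{\min(s,\vartheta)} B(t)\,dt$; for $t\ge\vartheta$ we have $Y_{\trunc{Nt}}=\eta u_{\trunc{Nt}}$, and after scaling by $N^{-1/2}$ and integrating this part is negligible, uniformly in $s$, because $\{u_n\}$ is stationary with a finite first moment. Combining the two pieces by an asymptotic-equivalence argument gives the claim in $D[0,1]$.

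First I would record that, since $\{u_n\}$ is $I(0)$ in the sense of (\ref{RWDefI0}), the partial sum process satisfies $N^{-1/2}\sum_{j=0}^{\trunc{Nt}} u_j \Rightarrow \sigma B(t)$ in $D[0,1]$ (the extra summand $u_0$ only adds a uniform $O_P(N^{-1/2})$ perturbation). The map $\omega\mapsto\bigl(s\mapsto\int_0^{\min(s,\vartheta)}\omega(t)\,dt\bigr)$ is continuous at every continuous path, being Lebesgue integration composed with the fixed continuous time change $s\mapsto\min(s,\vartheta)$, so the CMT yields
$$
  Z_N(s):=\int_0^{\min(s,\vartheta)} N^{-1/2}\Bigl(\sum_{j=0}^{\trunc{Nt}} u_j\Bigr)\,dt
  \ \Rightarrow\ \sigma\int_0^{\min(s,\vartheta)} B(t)\,dt, \qquad N\to\infty,
$$
in $D[0,1]$. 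The difference between $\int_0^{\min(s,\vartheta)} N^{-1/2} Y_{\trunc{Nt}}\,dt$ and $Z_N(s)$ stems only from the set $\{t<\vartheta:\trunc{Nt}=\trunc{N\vartheta}\}$ of measure at most $1/N$, on which $Y_{\trunc{Nt}}=\eta u_{\trunc{N\vartheta}}$; hence this difference is $O_P(N^{-3/2})$ uniformly in $s$.

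Next I would dispose of the post-change part: for $\vartheta\le s\le1$,
$$
  \Bigl|\int_\vartheta^s N^{-1/2} Y_{\trunc{Nt}}\,dt\Bigr|
  = \eta\,N^{-1/2}\Bigl|\int_\vartheta^s u_{\trunc{Nt}}\,dt\Bigr|
  \le \eta\,N^{-3/2}\sum_{i=\trunc{N\vartheta}}^{N} |u_i|,
$$
since each index $i$ occupies a time interval of length at most $1/N$. By weak stationarity and $E|u_1|<\infty$ (which holds since the fourth moments of the $u_n$ exist under condition (\ref{Cond4})), the expectation of the right-hand side is bounded by $\eta N^{-1/2} E|u_1|\to0$, so Markov's inequality gives $\sup_{\vartheta\le s\le1}\bigl|\int_\vartheta^s N^{-1/2} Y_{\trunc{Nt}}\,dt\bigr|\stackrel{P}{\to}0$. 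Writing, for $s\ge\vartheta$, $\int_0^s N^{-1/2} Y_{\trunc{Nt}}\,dt=\int_0^\vartheta N^{-1/2} Y_{\trunc{Nt}}\,dt+\int_\vartheta^s N^{-1/2} Y_{\trunc{Nt}}\,dt$ and using the preceding estimates (and the trivial identity for $s<\vartheta$) yields
$$
  \sup_{0\le s\le1}\Bigl|\int_0^s N^{-1/2} Y_{\trunc{Nt}}\,dt - Z_N(s)\Bigr|\stackrel{P}{\to}0.
$$
Since $Z_N\Rightarrow\sigma\int_0^{\min(\cdot,\vartheta)} B(t)\,dt$, the asymptotic-equivalence lemma (Billingsley (1968), Theorem~4.1) then gives the assertion; alternatively one may pass to a.s.\ versions via the Dudley/Skorohod/Wichura representation, exactly as in the proof of Lemma~\ref{I0toI1Lemma}.

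The stochastic content here is not new --- the FCLT for $\{u_n\}$ is just the $I(0)$ assumption --- so the main obstacle is bookkeeping: checking the continuity of the ``integrate up to $\min(s,\vartheta)$'' map needed for the CMT, and carefully tracking the floor-function boundary effects (the sliver $\{t<\vartheta:\trunc{Nt}=\trunc{N\vartheta}\}$ and the $u_0$ term) so that they are genuinely of smaller order uniformly in $s$. The post-change term is actually easier to handle than the corresponding term in Lemma~\ref{I0toI1Lemma}, since it is a stationary average rather than a random-walk increment, and already a crude first-moment bound suffices.
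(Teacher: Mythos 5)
Your proof is correct and follows essentially the same route as the paper: split the integral at the change fraction $\vartheta$, obtain the limit $\sigma\int_0^{\min(s,\vartheta)}B(t)\,dt$ from the pre-change random-walk segment via the FCLT and continuity of integration, and show the post-change contribution vanishes uniformly in $s$. The only (harmless) difference is that you dispose of the post-change term by a first-moment/Markov bound, whereas the paper notes it equals $N^{-1}$ times a quantity converging weakly to $\eta\sigma[B(s)-B(\vartheta)]$ and builds this $O(1/N)$ term into its approximating process $\widetilde{Z}_N$; both yield the same conclusion.
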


\begin{proof} Noting that
$ N^{-1/2} Y_{\trunc{Nt}} \Rightarrow \sigma B(t) $ if $ t < \vartheta $,
$ \int_\vartheta^s N^{1/2} Y_{\trunc{Nt}} \, dt \Rightarrow \eta \sigma [B(s)-B(\vartheta)] $
if $ \vartheta \le s \le 1 $, and
$$
  \int_0^s N^{-1/2} Y_{\trunc{Nt}} \, dt
  = \int_0^\vartheta N^{-1/2} Y_{\trunc{Nt}} \, dt
  + N^{-1} \int_\vartheta^s N^{1/2} Y_{\trunc{Nt}} \, dt,
$$
if $ \vartheta \le s \le 1 $,
the lemma is shown analogously to Lemma~\ref{I0toI1Lemma}, if we define
the $ D[0,1] $-valued process
$$
  \widetilde{Z}_N(s) = \left\{
   \begin{array}{ll}
     \sigma \int_0^s B(t) \, dt,           & 0 \le s < \vartheta, \\
     \sigma \int_0^\vartheta B(t) \, dt
       + \sigma \eta /N [ B(s) - B(\vartheta) ], & \vartheta \le s \le 1,
   \end{array}
   \right.
$$
and note that
$
  \widetilde{Z}_N(s) \Rightarrow \sigma \int_0^{\min(s,\vartheta)} B(t) \, dt.
$
Note that this limit process does not depend on $ \eta $.
\end{proof}

\begin{theorem}
\label{CPI1toI0}
Under the $I(1)$-to-$I(0)$ change-point model
(\ref{I1toI0}) we have
$$
  U_N(s) \Rightarrow \calU_{10,\vartheta}(s)
  = \frac{s^{-1} \zeta \int_0^s \biggl( \int_0^{\min(r,\vartheta)} B(t) \, dt \biggr)^2
          K( \zeta(r-s) ) \, dr }
         { \int_0^{\min(s,\vartheta)} B(t)^2 \, dt},
$$
as $ N \to \infty $, yielding
$$
  N^{-1} R_N \stackrel{d}{\to} \min\{ \kappa \le s \le 1 : \calU_{10,\vartheta}(s) > c \},
$$
as $ N \to \infty $.
\end{theorem}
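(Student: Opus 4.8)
The plan is to follow the proof of Theorem~\ref{CPI0toI1} closely, with Lemma~\ref{I1toI0Lemma} in the place of Lemma~\ref{I0toI1Lemma}, and with a slightly more careful treatment of the denominator of $U_N$. First I would rewrite the numerator as a Riemann integral,
$$
  \trunc{Ns}^{-3} \sum_{i=1}^{\trunc{Ns}} \Bigl( \sum_{j=1}^i Y_j \Bigr)^2 K_h( t_i - t_{\trunc{Ns}} )
   = \frac{N^3}{\trunc{Ns}^3}\,\frac{N}{h} \int_0^s \Bigl( \int_0^r N^{-1/2} Y_{\trunc{Nt}} \, dt \Bigr)^2 K\bigl[ (\trunc{Nr}-\trunc{Ns})/h \bigr] \, dr,
$$
and invoke Lemma~\ref{I1toI0Lemma}, which furnishes $\int_0^r N^{-1/2} Y_{\trunc{Nt}}\,dt \Rightarrow \sigma \int_0^{\min(r,\vartheta)} B(t)\,dt$ in $D[0,1]$. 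Using the Skorohod/Dudley/Wichura representation, the Lipschitz continuity of $K$, and $N/h \to \zeta$ --- exactly the argument carried out in the proof of Theorem~\ref{RWUnitRoot0} --- the right-hand side converges in $D[\kappa,1]$ to $s^{-3} \zeta \sigma^2 \int_0^s \bigl( \int_0^{\min(r,\vartheta)} B(t)\,dt \bigr)^2 K(\zeta(r-s))\,dr$.

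For the denominator I would split $\sum_{i=1}^{\trunc{Ns}} Y_i^2$ at the change-point $\trunc{N\vartheta}$. The post-change block $\trunc{Ns}^{-2} \sum_{i=\trunc{N\vartheta}}^{\trunc{Ns}} \eta^2 u_i^2$ is uniformly negligible: by the weak law of large numbers for the stationary sequence $\{u_n^2\}$ (condition~(\ref{Cond4})) one has $\sum_{i=1}^N u_i^2 = O_P(N)$, so this block is $O_P\bigl(\eta^2 N/(N\vartheta)^2\bigr) = O_P(1/N)$ uniformly over $s \in [\vartheta,1]$. The pre-change block is a partial-sum process of the $I(0)$ innovations $\{u_n\}$, for which $N^{-1/2} Y_{\trunc{Nt}} \Rightarrow \sigma B(t)$ on $[0,\vartheta)$, and, just as in Theorem~\ref{RWUnitRoot0}, this yields $\trunc{Ns}^{-2} \sum_{i=1}^{\trunc{Ns}} Y_i^2 \Rightarrow s^{-2} \sigma^2 \int_0^{\min(s,\vartheta)} B(t)^2\,dt$ in $D[\kappa,1]$. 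Since $\kappa>0$, we have $\int_0^{\min(\kappa,\vartheta)} B(t)^2\,dt > 0$ a.s., so this limit is strictly positive on $[\kappa,1]$ w.p.\ $1$; and, as already noted in Lemma~\ref{I1toI0Lemma}, the scale parameter $\eta$ disappears from both limits.

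It then remains to upgrade the two marginal limits to \emph{joint} weak convergence of the pair (numerator, denominator) in $(D[\kappa,1])^2$. This is done precisely as in the proof of Theorem~\ref{RWUnitRoot0}: both coordinate sequences are tight, and for any scalars $(\lambda_1,\lambda_2)$ the finite-dimensional distributions of the corresponding linear combination converge to those of the limit, whence the pair converges jointly; the continuous mapping theorem applied to the ratio map --- which is continuous at every point of $(D[\kappa,1])^2$ whose second coordinate stays bounded away from $0$, an event of probability $1$ here --- gives $U_N(s) \Rightarrow \calU_{10,\vartheta}(s)$. The limiting numerator is continuous in $s$ because $K$ is Lipschitz, and $\int_0^{\min(s,\vartheta)} B(t)^2\,dt$ is continuous in $s$ (and constant for $s \ge \vartheta$), so $\calU_{10,\vartheta} \in C[\kappa,1]$ w.p.\ $1$. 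For the stopping time, the event $\{ N^{-1} R_N > x \}$ is, up to the discretization of $[\kappa,x]$, the event that $U_N$ stays on the relevant side of $c$ throughout $[\kappa,x]$; applying the CMT to the associated sup/inf functional of $U_N$ and ruling out atoms of the limiting law by working in $(C[\kappa,1], \|\cdot\|_\infty)$ and invoking Lifshits' theorem, exactly as in the corollary to Theorem~\ref{RWUnitRoot0}, gives the stated convergence of $N^{-1}R_N$.

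The step I expect to be the main obstacle is the joint weak convergence of numerator and denominator: the numerator is a continuous functional of the integrated process $s \mapsto \int_0^s N^{-1/2} Y_{\trunc{Nt}}\,dt$, whereas the denominator is a functional of $N^{-1/2} Y_{\trunc{Nt}}$ itself, and one has to ensure that the pair converges jointly --- not merely coordinatewise --- which is exactly why the tightness-plus-finite-dimensional-distributions argument of Theorem~\ref{RWUnitRoot0} is needed rather than a bare continuous mapping argument. A second, more routine point that has no counterpart in Theorem~\ref{RWUnitRoot0} and must not be skipped is the uniform-in-$s$ negligibility of the post-change block in the denominator; everything else is a transcription of the earlier proofs.
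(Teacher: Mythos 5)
Your proposal follows the paper's own route essentially step for step: Lemma~\ref{I1toI0Lemma} for the numerator via the Riemann-integral rewriting, splitting the denominator at $\trunc{N\vartheta}$ with the post-change block $\eta^2 N^{-2}\sum u_j^2 = O_P(1/N)$ vanishing uniformly, and then the joint-convergence-plus-CMT and Lifshits arguments inherited from Theorem~\ref{RWUnitRoot0} and its corollary. The only difference is that you spell out a few points the paper leaves implicit (uniform negligibility of the post-change block, strict positivity of the limiting denominator on $[\kappa,1]$), which is fine.
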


\begin{proof} The theorem is proved using the same approach as in Theorem~\ref{CPI0toI1}.
We indicate the differences. First note that by Lemma~\ref{I1toI0Lemma}
\begin{eqnarray*}
  && \trunc{Ns}^{-3} \sum_{i=1}^{\trunc{Ns}} \biggl( \sum_{j=1}^i Y_j \biggr)^2
    K_h( t_i - t_{\trunc{Ns}} ) \\
   &=& \frac{N^3}{\trunc{Ns}^3} \frac{N}{h}
       \int_0^s \biggl( \int_0^r N^{-1/2} Y_{\trunc{Nt}} \, dt \biggr)^2
         K((\trunc{Nr}-\trunc{Ns})/h ) \, dr \\
   &\Rightarrow&
     s^{-1} \zeta \int_0^s \biggl( \sigma \int_0^{\min(r,\vartheta)} B(t) \, dt \biggr)^2
       K( \zeta(r-s) ) \, dr,
\end{eqnarray*}
as $ N \to \infty $. To handle the denominator of $ U_N $ observe that
$$
  \biggl\{ \int_0^s N^{-1} Y_{\trunc{Nt}}^2 \, dt : 0 \le s < \vartheta \biggr\}
  \Rightarrow
  \biggl\{ \sigma^2 \int_0^s B(t)^2 \, dt : 0 \le s < \vartheta \biggr\}.
$$
For $ s \ge \vartheta $ we obtain
\begin{eqnarray*}
  \trunc{Ns}^{-2} \sum_{j=1}^{\trunc{Ns}} Y_j^2
  &=& \frac{N^2}{\trunc{Ns}^2} \left( N^{-2} \sum_{j=1}^{\trunc{N\vartheta}-1} Y_j^2
   + N^{-2} \eta^2 \sum_{j=\trunc{N\vartheta}}^{\trunc{Ns}} u_j^2 \right)
  \Rightarrow s^{-2} \sigma^2 \int_0^\vartheta B(t)^2 \, dt
\end{eqnarray*}
yielding
$
\biggl\{ \trunc{Ns}^{-2} \sum_{j=1}^{\trunc{Ns}} Y_j^2 : s \in [0,1] \biggr\}
\Rightarrow
\biggl\{s^{-2} \sigma^2 \int_0^{\min(s,\vartheta)} B(t)^2 \, dt : s \in [0,1] \biggr\},
$
as $ N \to \infty $
\end{proof}

\begin{remark} Note that $ \calU_{10,\vartheta}$ does not depend on $ \eta $.
Hence, the detection procedure given by $ R_N $ is asymptotically robust w.r.t.
changes of the variance.
\end{remark}

\section{Simulations}
\label{Simulations}

We perform Monte Carlo simulations to investigate the actual finite sample
performances of the proposed monitoring procedure. We first consider the statistical
properties of the procedures,
if the time series is either $ I(0) $ or $ I(1) $. In a second step
we study the performance under change-point models. All simulations are based on
50,000 repetitions.

\subsection{Models without change-point (either $I(0)$ or $I(1)$)}

The first model we use for the simulations is as in Stock (1994a), an AR(1) process
with MA(1) errors,
$$
  Y_0 = 0, \ Y_{n} = \phi Y_{n-1} + e_n - \beta e_{n-1}, \ n = 1, \dots, N,
$$
where $ \phi $ and $ \beta $ are parameters and $ \{ e_n \} $ i.i.d.
$ N(0,1) $ innovations. The parameter values were chosen to be $
\phi = 1, 0.95, 0.9, 0.7 $ and $ \beta = -0.8, -0.5, 0, 0.5, 0.8 $.
We investigate the following quantities: Firstly, size and power of
the test which rejects $ H_0 $ if the monitoring procedure gives a
signal. Second, the average run length (ARL) defined as $ E(R_N) $
and $ E(\widetilde{R}_N) $, respectively, i.e., the average number
of observations until we get a signal. Additionally, we provide the
conditional ARL given that the procedure gives a signal (CARL)
defined as, e.g., $ E(R_N|R_N<N) $. That quantity informs us how
fast the procedure reacts if it reacts at all. We use a maximum
sample size of $ N = 250 $. The bandwidth was chosen as $ h = 50 $.
Simulated asymptotic critical values were used with $ \zeta = N/h =
5 $ to attain a nominal rejection probability of $ 5\% $. The
Gaussian and Epanechnikov kernels were investigated, which attach
smaller weights to past summands than to more current ones. We found
by simulations not reported here that the start of monitoring, $k$,
should be proportional to $h$, and $ k = 1.5 h $ yields a reasonable
rule of thumb for $ \zeta = 5 $.

Table~\ref{RWSimUnitRootPower2} presents our results for the proposed
procedure $ R_N $ to detect stationarity, using the
Epanechnikov kernel for weighting. Here $ H_0: $ $I(1)$-unit root is
given by $ \phi = 1 $.
The results are generally supportive of the theory developed in the paper. We do not
report the results for the Gaussian kernel, since they were quite similar.
The first three rows present the actual sizes for different values of $ \beta $.
It can be seen that there is only a slight size distortion, similar as for
the KPSS fixed-sample test. The remaining rows provide power estimates,
CARLs, and ARLs. Overall, it appears that the monitoring approach
provides a powerful method to detect quickly stationarity, as can be seen
from the CARL values in parentheses. In many cases stationarity can be detected
very early and it is not necessary to  wait until the time horizon $N$.

We next consider the properties of the procedure $ \widetilde{R}_N $ to detect
a unit root. Here $ H_0: $ $I(0)$-stationarity corresponds to $ | \phi | < 1 $
in our simulation model.
For the Newey-West estimator we have to choose the
lag truncation parameter $m$.
We considered the following choices of $ m $ as a function of the (current) sample size:
$ m3 = \trunc{ 0.75 n^{1/3} + 0.5 } $,
$ m4 = \trunc{ 4 (n/100)^{1/4} + 0.5 } $, and
$ m12 = \trunc{ 12 (n/100)^{1/4} + 0.5 } $ with
$ n = \trunc{N \kappa}, \dots, R_N \le N $
denoting the time point where the estimator has to be calculated.
The rules $ m4 $ and $ m12 $ have also been used by Kwiatkowski et al. (1992),
for $ m3 $ see Stock and Watson (2003, eq. 13.17).
For $ \phi = 0 $ and
$ \beta = -0.8, -0.5, 0, 0.5 $ we simulated the type I error
for all choices of $m$. As can be seen from the top rows of Table~\ref{RWSimI0I1},
the difference seems negligible. For the remaining cases given by
 $ \phi = 0.2, 0.6, 0.9, 1 $ we used $ m4 $.
The last three rows of the table provide the performance to detect the unit root given
by $ \phi = 1 $. Overall, the empirical rejection rates and ARL/CARL values indicate that
for moderate positive autocorrelation ($ 0 \le \phi \le 0.6 $) the procedure
has moderate size distortion. But, as expected, for $ \phi $ close to $1$
the procedure overreacts. The power is uniformly high for all values of $ \beta $
studied here.

\subsection{Change-point models}

We also investigated the performance of the detection methods $ R_N $ and
$ \widetilde{R}_N $ in change-point models. Of particular interest is to study
the influence of the bandwidth $h$ on the performance.
To evaluate the rule $ R_N $
(detection of stationarity), we used the following specification of the
change-point model given in the introduction,
$$
 Y_{n} = \phi_n Y_{n-1} + \epsilon_n \quad \mbox{where} \quad
 \left\{ \begin{array}{cc} \phi_n = 1, & n = 1, \dots, \trunc{N\vartheta}-1, \\
                        \phi_n = 0.5, & n = \trunc{N\vartheta}, \dots, N,
                        \end{array} \right.
$$
with $ N = 250 $. $ \{ \epsilon_t \} $ are i.i.d. $ N(0,1) $-innovations.
The change-point parameter
$ \vartheta $ is chosen as $ \vartheta = 0.1, 0.5, 0.75 $,
and the bandwidth as $ h = 125, 50, 25 $.

Table~\ref{RWSimChangePointI1I0} reports power,
the average delay, defined as $ E \max( R_N - \trunc{N\vartheta}, 0) $,
and the conditional average delay given the method provides a signal, defined
as  $ E( R_N | R_N < N ) - \trunc{N\vartheta} $, which informs us how quickly
the procedure reacts if it reacts at all. It can be seen that there is only a negligible
effect of the bandwidth $h$ on the average delay, but a remarkable
positive effect on the conditional average delay
and the statistical power to reject the unit root hypothesis.
Comparing $ h = 125 $ with $ h = 50 $ for $ \vartheta = 0.1 $ and $ \vartheta = 0.5 $
indicates that large bandwidths provide high overall power but the signal often comes late.
To detect the change early smaller bandwidths seem to be better. Comparing with $ h = 10 $
shows that CARL increases again. It seems, that for the setting studied
here values between $ 25 $ and $ 50 $ provide reasonable results.

To investigate the detection rule $ \widetilde{R}_N $ (detection of a unit root),
we used the same change-point model as above, but with $ \phi_n = 0.6 $
if $ n < \trunc{N \vartheta} $ and $ \phi_n = 1 $ if $ n \ge \trunc{N \vartheta} $.
This means, before the change the process is $ AR(1) $ with autoregressive parameter
$ 0.6 $, and after the change we are given a pure random walk.
The parameter $ \vartheta $ was chosen as above and $ h = 125, 50, 25, 10 $.
As can be seen from Table~\ref{RWSimChangePointI0I1}, the detection performance
is excellent in terms of power, average delay, and conditional average
delay. Results for $ h = 5 $ were almost identical to $ h = 10 $ and are
therefore omitted.
Overall, small bandwidths increase the power substantially and yield smaller
delays.

\subsection{An example}

Figure~\ref{Plot2} illustrates the detection performance of the proposed
procedure for a time series of length $ 250 $ which has a change-point.
The first 100 observations follow an AR(1) with
coefficient $ \phi = 0.8 $. After the change-point given by $ \vartheta = 0.4 $,
the series is a random walk ($\phi=1$). We applied the
procedure $ \widetilde{R}_N $ using the
Epanechnikov kernel, bandwidth $ h = 25 $, the lag selection rule $m4 $,
and an asymptotic $5\% $ control limit using $ \zeta = 10 $.
The change is detected at obs. $ 167 $.

\section{Software}

User-friendly and platform independent JAVA software implementing the proposed methods,
particularly providing asymptotic control limits, and example data sets can be downloaded from the author's webpage.

\section{Conclusions}

Monitoring rules to detect quickly stationarity and unit roots based
on a kernel-weighted process related to the KPSS statistics are
studied. Limiting distributions under various distributional
assumptions including local-to-unity and change-point models are
established. Simulations indicate that the procedures share the
moderate size distortion of the KPSS test, but due to its weighting
scheme controlled by a bandwidth parameter $h$ the reaction
performance is substantially improved. Both, changes from $I(0)$ to
$I(1)$ and changes from $ I(1) $ to $ I(0) $ can be detected in many
cases very early, if $h$ is chosen appropriately.

\section*{Acknowledgements}

The financial support of the DFG (Deutsche Forschungsgemeinschaft,
SFB 475, {\em Reduction of Complexity in Multivariate Data
Structures}) is gratefully acknowledged. I thank two anonymous
referees for constructive and helpful remarks which improved the
article, and Dipl.-Math. Sabine Teller for proof-reading a revised
version.

\newpage
%
\begin{center}
\begin{table}
\begin{tabular}{cccccc} \hline
   $ \phi $ &  \multicolumn{5}{c}{$\beta$} \\
           &  -0.8   &  -0.5    &  0       &  0.5    & 0.8   \\ \hline
$1$ &     $0.04$ & $0.04$ & $0.042$ & $0.051$ & $0.097$\\
    & $[171.9]$ & $[171.9]$ & $[171.7]$ & $[170.7]$ & $[165.2]$\\
$ 0.95 $ & $0.228$ & $0.23$ & $0.236$ & $0.285$ & $0.462$\\
         & $(101.2)$ & $(101.2)$ & $(100)$ & $(92.6)$ & $(70.7)$\\
         & $[158.2]$ & $[158]$ & $[157.3]$ & $[151.5]$ & $[126.9]$\\
$ 0.9 $  & $0.347$ & $0.352$ & $0.362$ & $0.443$ & $0.642$\\
         & $(92.2)$ & $(91.6)$ & $(90.2)$ & $(79.6)$ & $(52.2)$\\
         &$[146.3]$ & $[145.6]$ & $[144.3]$ & $[132.7]$ & $[96.2]$\\
$ 0.7 $  & $0.557$ & $0.557$ & $0.589$ & $0.717$ & $0.931$\\
         & $(69)$ & $(68.5)$ & $(64.5)$ & $(46.3)$ & $(22.4)$\\
         & $[116]$ & $[115.7]$ & $[109.9]$ & $[82.8]$ & $[33]$\\
\hline
 \end{tabular}
\caption{Detecting stationarity using $ R_N $ with $ \zeta = 5 $: Empirical size
respectively power, CARL given a signal (in parentheses), and ARL (in brackets) for
various values of $ \phi $ (AR parameter) and $ \beta $ (MA parameter).}
\label{RWSimUnitRootPower2}
\end{table}
\end{center}

\begin{center}
\begin{table}
\begin{tabular}{cccccc}  \hline
       & \multicolumn{4}{c}{$\beta$} \\
$\phi$ [lag rule] & $-0.8$ & $-0.5$ & $0$ & $0.5$  \\ \hline
$0$ [m3] & $0.036$ & $0.035$ & $0.023$ & $0.001$\\
$0$ [m4] & $0.033$ & $0.031$ & $0.022$ & $0.002$\\
$0$ [m12]&$0.016$ & $0.017$ & $0.017$ & $0.005$\\
$0.2$    & $0.039$ & $0.038$ & $0.03$ & $0.005$\\
         & $[173.2]$ & $[173.3]$ & $[173.7]$ & $[174.8]$\\
$0.6$    & $0.082$ & $0.083$ & $0.074$ & $0.039$\\
         & $[170.4]$ & $[170.3]$ & $[171]$ & $[173.2]$\\
$0.9$    & $0.396$ & $0.399$ & $0.391$ & $0.358$\\
         & $[140]$ & $[139.7]$ & $[140.6]$ & $[144.9]$\\
$1$      & $0.952$ & $0.953$ & $0.955$ & $0.951$\\
         & $(51.3)$ & $(51.3)$ & $(51.3)$ & $(51)$\\
         & $[57.2]$ & $[57.1]$ & $[56.9]$ & $[57.1]$\\
\hline
\end{tabular}
\caption{Detecting unit roots using $\widetilde{R}_N$ with $\zeta=5$: Empirical size
respectively power, CARL given a signal (in parentheses), and ARL (in brackets),
 for various values of $ \phi $ (AR parameter) and $ \beta $ (MA parameter).}
\label{RWSimI0I1}
\end{table}
\end{center}

\begin{center}
\begin{table}
\begin{tabular}{cccc} \hline
            &  \multicolumn{3}{c}{$ \vartheta $} \\
            &  $ 0.1 $  & $ 0.5 $ & $ 0.75 $ \\ \hline
$ h = 125 $ & $0.385$ & $0.072$ & $0.066$\\
            & $(133.2)$ & $(54.6)$ & $(41.7)$\\
            & $[189.7]$ & $[119.9]$ & $[60.3]$\\
$ h = 50$   & $0.293$ & $0.051$ & $0.055$\\
            & $(112.6)$ & $(39.4)$ & $(21.7)$\\
            & $[192]$ & $[120.2]$ & $[60.1]$\\
$ h = 25 $  & $0.247$ & $0.041$ & $0.046$\\
            & $(113.5)$ & $(37.4)$ & $(15.4)$\\
            & $[197.4]$ & $[120.9]$ & $[60.4]$\\
$ h = 10 $  & $0.208$ & $0.031$ & $0.037$\\
            & $(117.3)$ & $(40)$ & $(13.8)$\\
            & $[202.6]$ & $[121.9]$ & $[60.8]$\\
\hline
 \end{tabular}
\caption{Change from $I(1)$ to $I(0)$: Empirical rejection rates, conditional average
delay (in parentheses), and unconditional average delay (in brackets).}
\label{RWSimChangePointI1I0}
\end{table}
\end{center}

\begin{center}
\begin{table}
\begin{tabular}{cccc} \hline
            &  \multicolumn{3}{c}{$ \vartheta $} \\
            & $ 0.1 $ & $ 0.5 $  & $ 0.75 $ \\ \hline
$ h = 125 $ & $0.94$ & $0.744$ & $0.155$\\
            & $(123.7)$ & $(87.9)$ & $(49.7)$\\
            & $[129.8]$ & $[97.4]$ & $[58.5]$\\
$ h = 50 $  & $0.966$ & $0.854$ & $0.468$\\
            & $(103.2)$ & $(74)$ & $(49.4)$\\
            & $[107.4]$ & $[80.6]$ & $[53.7]$\\
$ h = 25 $  & $0.972$ & $0.895$ & $0.628$\\
            & $(94.8)$ & $(65)$ & $(44.3)$\\
            & $[98.5]$ & $[70.4]$ & $[48.4]$\\
$ h = 10 $  & $0.974$ & $0.913$ & $0.702$\\
            & $(89.8)$ & $(58.2)$ & $(39.4)$\\
            & $[93.4]$ & $[63.1]$ & $[43.8]$\\
\hline
\end{tabular}
\caption{Change from $I(0)$ to $I(1)$: Empirical rejection rates, conditional average
delay (in parentheses), and unconditional average delay (in brackets).}
\label{RWSimChangePointI0I1}
\end{table}
\end{center}

\begin{figure}
\includegraphics[height = 10cm]{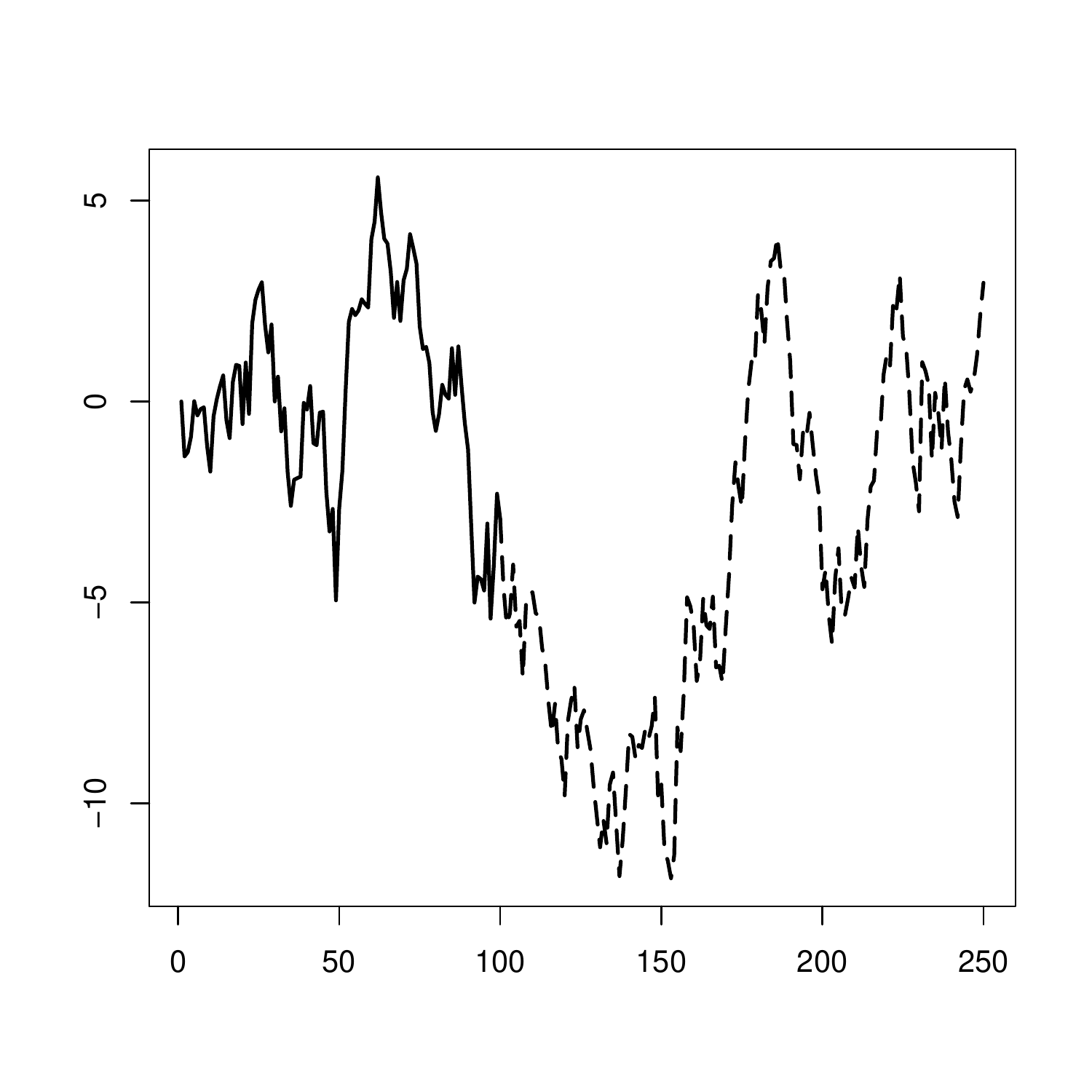}
\caption{A time series with a change-point at obs. 100 where the AR coefficient changes
from $0.8$ to $1$. The change is detected at obs. 161.} \label{Plot2}
\end{figure}

\end{document}